\newtheorem{thm}{Theorem}[section]
\newtheorem*{thmm}{Main Theorem}
\newtheorem{prop}[thm]{Proposition}
\newtheorem{lem}[thm]{Lemma}
\newtheorem{cor}[thm]{Corollary}
\theoremstyle{remark}
\newtheorem{rem}[thm]{Remark}
\newtheorem{defn}[thm]{Definition}
\newtheorem{ex}[thm]{Example}
\newcommand{\C}{\mathbb{C}}
\newcommand{\N}{\mathbb{N}}
\newcommand{\Z}{\mathbb{Z}}
\renewcommand{\k}{\mathbbm{k}}
\newcommand{\A}{\mathbb{A}}
\renewcommand{\Z}{\mathbb{Z}}
\newcommand{\g}{\mathfrak{g}}
\renewcommand{\l}{\mathfrak{l}}
\newcommand{\m}{\mathfrak{m}}
\renewcommand{\v}{\mathfrak{v}}
\newcommand{\R}{\mathcal{R}}
\newcommand{\B}{\mathcal{B}}
\renewcommand{\S}{\mathcal{S}}
\newcommand{\V}{\mathcal{V}}
\newcommand{\K}{\mathcal{K}}
\newcommand{\M}{{\mathcal{M}}}
\newcommand{\ad}{\operatorname{ad}}
\newcommand{\Ad}{\operatorname{Ad}}
\newcommand{\tr}{\operatorname{tr}}
\newcommand{\Lie}{\operatorname{Lie}}
\newcommand{\Ker}{\operatorname{Ker}}
\newcommand{\Hom}{\operatorname{Hom}}
\newcommand{\gr}{\operatorname{gr}}
\renewcommand{\th}{{\operatorname{th}}}
\newcommand{\Spec}{\operatorname{Spec}}
\newcommand{\res}{{\operatorname{res}}}
\newcommand{\Out}{\operatorname{Out}}
\newcommand{\Aut}{\operatorname{Aut}}
\newcommand{\Inn}{\operatorname{Inn}}
\newcommand{\Rees}{\operatorname{Rees}}
\newcommand{\Fun}{\operatorname{Fun}}
\newcommand{\Mat}{\operatorname{Mat}}
\newcommand{\Gr}{\operatorname{Gr}}
\newcommand{\Max}{\operatorname{Max}}
\newcommand{\om}{\overline{m}}
\newcommand{\oP}{{\overline{\Phi}}}
\newcommand{\orr}{{\overline{r}}}
\newcommand{\oPhi}{\overline{\Phi}}
\newcommand{\oOq}{\overline{\mathcal{O}}_q}
\newcommand{\oZo}{\overline{Z}_0}
\newcommand{\ua}{{\underline{a}}}
\newcommand{\ub}{{\underline{b}}}
\newcommand{\uc}{{\underline{c}}}
\newcommand{\uell}{{\underline{\ell - 1}}}
\newcommand{\tPhi}{\widetilde{\Phi}}
\newcommand{\tPsi}{\widetilde{\Psi}}
\newcommand{\oH}{{\Hom_\R(\M,\R)}}
\begin{document}

\title{Transfer results for Frobenius extensions}
\author{Stephane Launois \& Lewis Topley}
\address{}
\email{s.launois@kent.ac.uk}
\email{l.topley@kent.ac.}
\address{Sibson Building, The University of Kent, Canterbury, CT2 7NZ, UK}

\maketitle

\newcounter{parno}
\renewcommand{\theparno}{{\sf \thesection.\arabic{parno}}}
\newcommand{\p}{\smallskip \refstepcounter{parno}\noindent \theparno \space }
\newcommand{\clear}{\setcounter{parno}{0}}

\begin{abstract}
We study Frobenius extensions which are free-filtered by a totally ordered, finitely generated
abelian group, and their free-graded counterparts. First we show that the Frobenius property passes
up from a free-graded extension to a free-filtered extension, then also from a free-filtered
extension to the extension of their Rees algebras. Our main theorem states that,
under some natural hypotheses, a free-filtered extension of algebras is Frobenius if and only if
the associated graded extension is Frobenius.
In the final section we apply this theorem to provide new examples and non-examples
of Frobenius extensions.
\end{abstract}

\section{Introduction}
Throughout this paper $\k$ is a field of any characteristic. A finite dimensional algebra $\R$ over $\k$ is called a Frobenius
algebra if the dual of the right regular module is isomorphic to the left regular module $(\R_\R)^* \cong {}_\R\R$.
Equivalently $\R$ admits a linear map $\R\rightarrow \k$ whose kernel contains no left or right ideals - we call this the Frobenius form of $\R$.
It was first observed by Nakayama that the representation theory of Frobenius algebras admits extremely nice duality properties.
For instance, it is known that the projective and injective modules coincide and, in particular,
the left regular module is injective. Three notable examples include the group algebras of finite groups, reduced enveloping algebras
of restricted Lie algebras and semidirect products $\R \ltimes \R^*$ where $\R$ is any Artinian ring \cite[pp. 127]{ARS}, \cite[Proposition~1.2]{FP88}. 

Some of the most interesting algebras arising in representation theory are free modules of finite type over a central affine subalgebra $\S \subseteq \R$.
Under very mild assumptions $\S$ will act via $\k$ on the simple modules and so one is led to consider the family of algebras $\{\R/\m \R \mid \m \in \Max \S\}$,
 and when these quotients are Frobenius algebras the representation theory of $\R$ is somewhat simplified. We say that a $\k$-algebra $\R$ is a free Frobenius
extension of a subalgebra $\S$ when $\R$ is a free left $\S$-module and there exists a form $\Phi :\R \rightarrow \S$ generalising the Frobenius form of a Frobenius algebra (see \textsection\ref{thefrobform}).
When $\S\subseteq \R$ is a central extension which is Frobenius, the family of quotients $\R / \m \R$ over the maximal spectrum $\Max \S$ are all Frobenius
(apply Lemma~\ref{quotientslemma} for example). Thus we view Frobenius extensions as a natural and useful generalisation of Frobenius algebras.

Brown--Gordon--Stroppel gave many new examples of Frobenius extensions \cite{BGS06}. Their approach was fairly uniform:
in each case they gave an example of a form $\Phi : \R \rightarrow \S$ and checked the Frobenius property via a single simple hypothesis.
In \cite[1.6]{BGS06} they asked whether there exists an axiomatic approach which would apply to all of their examples simultaneously,
and it was this question which provided the first motivation for our work. One feature shared by many of their examples,
as well as other classical examples, is a filtration by a totally ordered finitely generated abelian group $G$,
and in this paper we develop general tools which might help to prove the Frobenius property in the presence of such a filtration.

Let $G$ be as above and suppose that $\R$ is a $G$-graded $\k$-algebra and that $\S\subseteq \R$ is a
graded subalgebra such that $\R$ is a free-graded left $\S$-module. We say that $\S \subseteq \R$ is a free-graded Frobenius extension if $\R$ comes
equipped with a homogeneous Frobenius form $\Phi: \R \rightarrow \S$. Similarly we say that $\S \subseteq \R$ is a free-filtered Frobenius extension
if $\R$ is a $G$-filtered algebra, a free-filtered $\S$-module and $\S \subseteq \R$ is a Frobenius extension. When $\R$ is a $G$-filtered algebra we
write $\gr \R$ for the associated graded algebra.

The following is the main theorem of this paper:
\begin{thmm}
Suppose that $\S \subseteq \R$ is a free-filtered extension. Then $\S \subseteq \R$ is a free-filtered Frobenius extension if and only if $\gr \S\subseteq \gr \R$ is a free-graded
Frobenius extension.
\end{thmm}

Our method is to show that the Frobenius property passes up from a free-graded extension to a free-filtered extension (any choice of filtered
lift of the homogeneous Frobenius form will suffice) and then show that the Rees algebra of a free-filtered Frobenius extension is naturally a free-graded Frobenius extension.
For the latter part we define the Rees algebra $\Rees(\R)$ for an algebra $\R$ filtered by a totally ordered, finitely generated abelian group,
and observe that $\Rees(\R)$ simultaneously deforms $\R$ and $\gr \R$ (Lemma~\ref{reducingRees}), a useful property which generalises the well-known situation where $G = \Z$.

In Section~\ref{freefrobextensions} we provide the general background on free-graded (free-filtered) modules and algebras. In Section~\ref{frobsec}
we give the precise definition of free-graded (free-filtered) Frobenius extensions and define certain invariants of such extensions: the rank, the degree
and the Nakayama automorphism. We also give a brief account of the Rees algebra and prove the deformation property mentioned previously. 
In Section~\ref{proofof} we state and prove various transfer results and combine them to deduce a proof of the main theorem.
Finally, in Section~\ref{appsec} we give a few applications of the main theorem, describing new examples and non-examples of Frobenius extensions.
For our first example we consider a quantum Schubert cells at
an $\ell$th root of unity, and we show that these are all Frobenius extensions of their $\ell$-centre. Next we consider modular finite $W$-algebras,
first defined by Premet in \cite{Pr02} and recently studied by Goodwin and the second author in \cite{GT17}. We show that
the modular $W$-algebra in characteristic $p > 0$ is a free-filtered Frobenius extension of its $p$-centre, with trivial Nakayama automorphism. In the final section
we use the transfer result to show that the quantum Grassmanian $\mathcal{G}r(2,4)$ at an $\ell$th root of unity actually fails to
be a free-Frobenius extension of its $\ell$-centre.
We note that although it is a reasonably tractable problem to show that a given extension of algebras is Frobenius, we do not know of any methods
in the current literature which allow you to prove that an extension is not Frobenius. Our key observation is that if $\S \subseteq \R$ is a
$G$-filtered Frobenius extension and $\R$ is a free-filtered $\S$-module then the multiset of filtered degrees of the $\S$-basis for $\R$
must exhibit a special symmetry (Lemma~\ref{symmetricdegrees}), an observation which we use repeatedly in different guises throughout the paper.

\medskip

\noindent {\bf Acknowledgements:} Both authors are grateful for funding received from EPSRC grant EP/N034449/1. Furthermore the second author would like to to acknowledge
the support of the European Commission, Seventh Framework Programme, Grant Agreement 600376, as well as grants CPDA125818/12, 60A01-4222/15 and DOR1691049/16
from the University of Padova.

\section{Preliminaries on gradings and filtrations}\label{freefrobextensions}

The purpose of this paper is to study Frobenius extensions $\S \subseteq \R$ when $\R$ is a finitely generated filtered $\k$-algebra
and $\S$ is a subalgebra. In order to make our results as broadly applicable as possible we work with filtrations over ordered groups.
For the reader's convenience we have provided a brief introduction to  the theory of algebras and modules endowed with such filtrations.
When we say ``module'' we mean ``left module'' unless otherwise stated.

All algebras are assumed to have the invariant basis number property, ie. $\R^n \cong \R^m$ as left modules implies $n = m$.
By considering the quotient $\R^n/\m \R^n$ where $\m$ is a maximal ideal of $\R$ it is clear that all commutative rings possess this property.
Similarly, every ring which is a finite free module over a commutative subring possesses this property, and such rings constitute the majority of our examples.

\subsection{Totally ordered groups}\label{totordgp}
Throughout this paper $(G, \leq)$ shall always be a totally ordered, finitely generated abelian group. Note that such a group
is obviously torsion free, hence free. All applications which we have in mind involve
finitely many copies of $\Z$ ordered lexicographically. Much later we shall need to consider the group algebra $\k G$, and so we use multiplicative notation for $G$,
writing $1_G$ for the identity element.

\subsection{Filtrations by groups}\label{filtbygp}
A $G$-filtration of a $\k$-vector space $\V$ is a collection of subspaces $(\V_g : g\in G)$ satisfying $\V_g \subseteq \V_h$ whenever $g \leq h$.
Throughout this article we refer to $G$-filtrations simply as filtrations however, since $G$ shall be fixed throughout, this shall cause no confusion.
We say that the filtration is:
\begin{itemize}
\item{\emph{$\k$-finite} if $\dim \V_g < \infty$ for all $g\in G$;} 
\item{\emph{discrete} if $\V_g = 0$ for some $g$;}
\item{\emph{non-negative} if $\V_g = 0$ for $g < 1_G$;}
\item{\emph{exhaustive} if $\bigcup_{g\in G} \V_g = \V$;}
\item{\emph{proper} if $\V_g \neq \V$ for all $g$.}
\end{itemize}
Throughout this paper all filtrations are assumed to be $\k$-finite, discrete, exhaustive and proper.

For $0 \neq v \in \V$ the hypothesis that the filtration is discrete implies that $\{g\in G: v \in \V_g\}$ has a minimum in $G$, which we define to
be the degree $\deg(v)$. The degree of $0 \in \V$ is taken to be $-\infty$ which, by convention, shall satisfy $-\infty < g$ for all $g\in G$.
If $\R$ is a $\k$-algebra then a filtration of $\R$ should satisfy the additional hypothesis that $\R_g \R_h \subseteq \R_{gh}$ for all $g,h\in G$.

\begin{ex}\label{standardfilt}
Let $F :=\k\langle X_i \mid i \in I \rangle$ be the free associative algebra on generators $\{ X_i \mid i \in I\}$, which we think of as polynomials in non-commuting variables.
For each tuple $(d_i \mid i\in I)$ of elements of $G$, we define a grading $F := \bigoplus_{g\in G } F^g$ by placing $X_i$ in degree $d_i$. This induces a filtration by 
$F_g := \bigoplus_{h \leq g} F^h$. Now suppose that $\R$ is generated by $\{r_i\in \R \mid i\in I\}$
and define a homomorphism $\eta : F \rightarrow \R$ by $X_i \mapsto r_i$ for all $i$. This endows $\R$ with a filtration by setting $$\R_g := \eta(F_g).$$
Filtrations defined in this manner are called \emph{standard filtrations}.
\end{ex}

\subsection{Gradings by groups}

A $G$-grading (or just a grading) of a vector space is a decomposition $V = \bigoplus_{g\in G} V^g$. 
All gradings in this article are assumed to be $\k$-finite, exhaustive, non-negative and proper, and these
conditions are defined analogously to those same conditions on filtrations. Every grading $V = \bigoplus_{g\in G} V^g$
induces a filtration $V_g := \bigoplus_{h \leq g} V^h$. If we have a filtered vector space $\V$ then
we may define the associated graded space $V = \gr \V$ by setting $V^g = \V_g / (\sum_{h< g} \V_h)$
and $V = \bigoplus_{g\in G} V^g$. We define the degree function 
$\deg : \bigcup_{g\in G} V^g \rightarrow G\cup \{-\infty\}$ of homogeneous element by setting $\deg V^g\setminus \{0\} = g$ and $\deg(0) = -\infty$.

If $R$ is a graded $\k$-algebra then we insist that gradings satisfy $R^g R^h \subseteq R^{gh}$ for all $g,h \in G$. When $\R$ is a filtered algebra
$\gr \R$ is a graded algebra in the obvious manner. If $M$ is a graded $R$-module and $d\in G$ then we can define the shifted module $M[d]$
by setting $M[d] \cong M$ as $R$-modules and $M[d]_g:= M_{dg}$ as graded spaces.

\subsection{Free-filtered and free-graded modules}

If $\R$ is a filtered $\k$-algebra then a free-filtered $\R$-module is a filtered module $\M$ which is free on some basis $\{m_i \mid i \in I\}$
for which there exist elements $d_i \in G$ for every $i \in I$ such that
\begin{eqnarray}\label{filteredgens}
\M_g = \bigoplus_{i \in I} \R_{gd_i^{-1}} m_i.
\end{eqnarray}
We call a basis of a free-filtered module satisfying (\ref{filteredgens}) a \emph{free-filtered basis}.
\begin{rem}
It is worth noting
that not every basis for a free-filtered $\R$-module is a free-filtered basis. For example, if $\M = \R m_1 \oplus \R m_2$
is free-filtered and generated in degrees $d_1, d_2$ with $d_1 > d_2$ then $\{m_1, m_1 + m_2\}$ is not a free-filtered basis.
\end{rem}

If $R$ is a graded $\k$-algebra then a free-graded $R$-module is a graded module $M$ admitting
a homogeneous basis $\{m_i \in M \mid i \in I\}$. In other words free-graded modules are precisely those of the form
$\bigoplus_{i \in I} \R[d_i]$ for tuples $(d_i \mid i \in I) \in G^I$. As a consequence,
\begin{eqnarray}\label{gradeddims}
M^g = \bigoplus_{i \in I} R^{gd_i^{-1}} m_i
\end{eqnarray}
for all $g\in G$.

\begin{lem}\label{fgvsff}
Let $\M$ be a finitely generated filtered $\R$-module and $M = \gr \M$, $R = \gr \R$. Then $\M$ is free-filtered over $\R$ if and only if $M$ is free-graded over $R$.
In this case they have the same rank.
\end{lem}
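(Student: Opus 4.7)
The plan is to prove each direction by explicitly constructing the alleged basis on the other side; the rank equality then follows from comparing cardinalities. First I would note that the invariant basis number hypothesis, combined with the finite generation of $\M$, forces any free-filtered basis of $\M$ or homogeneous $R$-basis of $M$ to have finite index set: writing each of the finitely many generators in terms of the basis produces a finite subset of basis elements whose $\R$-span (resp. $R$-span) is the entire module, so the complementary free summand must vanish.

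For the forward direction, given a free-filtered basis $\{m_i\}_{i\in I}$ of $\M$ with degrees $d_i\in G$, I would show that the images $\bar{m}_i\in M^{d_i}$ form a homogeneous $R$-basis of $M$. The key computation is that since the translation $h\mapsto hd_i^{-1}$ is order-preserving on $G$, applying (\ref{filteredgens}) at every $h<g$ and summing assembles to
\[
\sum_{h<g}\M_h \;=\; \bigoplus_{i\in I}\R_{<gd_i^{-1}}\,m_i,
\]
where $\R_{<h}:=\sum_{h'<h}\R_{h'}$. Quotienting $\M_g=\bigoplus_i\R_{gd_i^{-1}}m_i$ by this subspace yields $M^g=\bigoplus_i R^{gd_i^{-1}}\bar{m}_i$, which is precisely (\ref{gradeddims}).

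For the converse, given a homogeneous $R$-basis $\{\bar m_i\in M^{d_i}\}$ of $M$, I would lift each $\bar m_i$ to an element $m_i\in\M_{d_i}$ of exact degree $d_i$ and verify that $\{m_i\}$ is a free-filtered basis. For directness, a supposed nontrivial relation $\sum_i r_im_i=0$ with $r_i\in\R_{gd_i^{-1}}$ lies in $\M_{g^*}$ for $g^*:=\max_{r_i\neq 0}\deg(r_i)d_i$; passing to $M^{g^*}$ gives $\sum_{i:\deg(r_i)d_i=g^*}\bar{r}_i\bar{m}_i=0$, and free-gradedness of $M$ then forces each such $\bar r_i=0$, contradicting the definition of $\deg(r_i)$.

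The main obstacle is the spanning assertion in the converse. Given $x\in\M_g$ one can lift the expansion of $\bar x\in M^g$ in the $\bar m_i$ to produce $r_i\in\R_{gd_i^{-1}}$ with $x-\sum_i r_im_i\in\sum_{h<g}\M_h$, and then iterate on the remainder. Since $G$ need not satisfy any descending chain condition, this iteration is not a priori finite. My resolution uses $\k$-finiteness: because $\dim_\k\M_g<\infty$ and $\dim_\k\M_g=\sum_{h\leq g}\dim_\k M^h$, the set $\{h\leq g:M^h\neq 0\}$ is finite. Each descent step lowers the degree of the remainder to a value at which the graded image is nonzero, so the degree sequence is strictly decreasing within this finite set and must terminate; this expresses $x$ as a finite sum in $\bigoplus_i\R_{gd_i^{-1}}m_i$, completing the verification of (\ref{filteredgens}).
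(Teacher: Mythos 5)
Your proof is correct and follows essentially the same strategy as the paper's: for the forward direction both arguments compute $\gr\M$ directly from the free-filtered decomposition (the paper first reduces to rank one and applies $\gr\M\cong(\gr\R)[\deg m]$, while you sum (\ref{filteredgens}) over $h<g$ and quotient, which amounts to the same calculation), and for the converse both lift homogeneous generators and run an induction on degree whose termination is guaranteed by discreteness and $\k$-finiteness (the paper outsources freeness to \cite[Lemma~4.7(2)]{To16} and describes the rest as ``an easy induction,'' where you give the leading-term and descent arguments explicitly). One small remark: the identity $\dim_\k\M_g=\sum_{h\leq g}\dim_\k M^h$ you invoke is itself a filtered-to-graded comparison that needs care over a general totally ordered $G$, and it is unnecessary here; the finiteness of $\{h\leq g: M^h\neq 0\}$ that your descent requires is immediate from the standing $\k$-finiteness convention on the grading of $M$, which says precisely that $\bigoplus_{h\leq g}M^h$ is finite dimensional.
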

\begin{proof}
If $\M = \bigoplus_{i \in I} \M_i$ is any direct sum decomposition then $M \cong \bigoplus_{i \in I} \gr \M_i$, and so it suffices to work in the rank one case
when proving the `only if' part. If $\M_g = \R_g m$ for some $m \in \M$ then $$\gr \M = \bigoplus_{g\in G} (\R_g m / \sum_{h < g}\R_{h} m) \cong (\gr \R)[\deg m]$$
as graded $\gr \R$-modules. Conversely, if $\{\om_i \in M \mid i \in I\}$ are homogeneous generators of $M$ then we can choose lifts $\{m_i \in \M \mid i\in I\}$
satisfying $m_i + \sum_{g < d_i} \M_{g} = \om_i$ for all $i$, where $d_i = \deg \om_i$. The argument of \cite[Lemma~4.7(2)]{To16} shows that
$\M$ is a free $\R$-module, whilst an easy induction using (\ref{gradeddims}) shows that $\M_g = \bigoplus_{i \in I} \R_{gd_i^{-1}} m_i$.
Note that this latter step relies on the filtration being discrete and $\k$-finite.
\end{proof}
\begin{lem}\label{multisetswelldefined}
The following hold:
\begin{enumerate}
\item[(i)] The multiset of degrees of any free-filtered basis of a finitely generated free-filtered $\R$-module $\M$ are uniquely determined by $\M$;
\item[(ii)] The multiset of degrees of any homogeneous basis of a finitely generated free-graded $\R$-module $M$ are uniquely determined by $M$;
\end{enumerate}
\end{lem}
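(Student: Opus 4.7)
The plan is to establish (ii) first by reducing $M$ modulo its augmentation ideal, then to derive (i) from (ii) by invoking Lemma \ref{fgvsff}.

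For (ii), I would begin by noting that, since gradings in this paper are assumed non-negative, a short direct check shows that the identity $1_R$ must lie in $R^{1_G}$; consequently $R_+ := \bigoplus_{g > 1_G} R^g$ is a graded two-sided ideal of $R$ with quotient $R/R_+ \cong R^{1_G}$ concentrated in degree $1_G$. Given any free-graded decomposition $M = \bigoplus_{i \in I} R[d_i]$, reducing mod $R_+ M$ produces the graded $\k$-vector space
$$ M/R_+M \;\cong\; \bigoplus_{i \in I} R^{1_G}[d_i], $$
whose degree-$g$ component has $\k$-dimension $\#\{i \in I : d_i = g\} \cdot \dim_\k R^{1_G}$. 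Because $M/R_+M$ is intrinsic to $M$ and $\dim_\k R^{1_G}\geq 1$, these multiplicities — and hence the multiset $(d_i)$ — are recovered from $M$ alone.

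For (i), I would take any free-filtered basis $\{m_i : i \in I\}$ of $\M$ with associated degrees $(d_i)$. The defining identity $\M_g = \bigoplus_i \R_{gd_i^{-1}} m_i$ forces $\deg m_i = d_i$, so each image $\bar m_i$ is a nonzero homogeneous element of $M^{d_i}$ in $M = \gr \M$. The computation used in the proof of Lemma \ref{fgvsff} then shows that $\{\bar m_i\}$ is a free-graded basis of $M$ whose multiset of degrees is precisely $(d_i)$. Since $\gr \M$ depends only on $\M$, applying (ii) to $M$ yields that $(d_i)$ is determined by $\M$.

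The main point requiring care is the structural claim that $R_+$ is a two-sided ideal with $R/R_+$ concentrated in degree $1_G$, since the whole argument pivots on identifying $M/R_+M$ as $\bigoplus_i R^{1_G}[d_i]$; once this is secure (and it follows directly from non-negativity of the grading together with $1_R \in R^{1_G}$), both parts of the lemma reduce to bookkeeping.
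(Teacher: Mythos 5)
Your argument is correct and takes a genuinely different route from the paper's. The paper works directly with the dimension function $g \mapsto \dim \M_g$, showing that the $G$-translates of a suitable $f \in \Fun(G,\Z)$ are linearly independent over $\Z$ and then reading off the multiset from the identity $\sum_i f_{c_i} = \sum_i f_{d_i}$; this treats the filtered case (i) first and derives (ii) as an analogue, and it invokes the standing invariant-basis-number hypothesis to get $|I| = |I'|$. You instead prove (ii) structurally by reducing modulo the graded augmentation ideal $R_+ = \bigoplus_{g>1_G} R^g$ (the fact that $1_R \in R^{1_G}$ and that $R_+$ is a two-sided graded ideal are standard and correct), so the graded dimensions of $M/R_+M$ recover the multiplicities $\#\{i : d_i = g\}$ directly, after dividing by $\dim_\k R^{1_G} \geq 1$; you then derive (i) from (ii) via Lemma~\ref{fgvsff}, noting that a free-filtered basis of $\M$ in degrees $(d_i)$ passes to a free-graded basis of $\gr\M$ in the same degrees. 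Your route is arguably cleaner: it is the graded Nakayama-style argument, it sidesteps the invariant-basis-number assumption entirely (the multiplicities are recovered degree by degree, so the cardinalities match automatically), and the reduction $\gr$ cleanly localizes the work in the graded setting where the computation is transparent. The paper's approach, by contrast, is self-contained at the level of dimension bookkeeping and does not require identifying $R^{1_G}$ or the ideal $R_+$. One small caution in your write-up: the paper's formal shift convention $M[d]_g := M_{dg}$ is in slight tension with equation~(\ref{gradeddims}), and you should anchor your degree bookkeeping to (\ref{gradeddims}) (as you implicitly do) so that a basis element of degree $d_i$ contributes to $(M/R_+M)^{d_i}$ rather than $(M/R_+M)^{d_i^{-1}}$; the multiset is recovered either way, but it is worth being explicit to avoid an apparent sign discrepancy with the paper's notation.
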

\begin{proof}
Let $F:= \Fun(G, \Z)$ denote the abelian group of all functions $G \rightarrow \Z$. The group $G$ acts on $F$ by left translations, and we claim that if $f \neq 0$, and if $f(g) = 0$ for all $g < g' \in G$ 
then $G f$ generates a free abelian subgroup of $F$. The proof is by induction on the length of any $\Z$-dependence.
Suppose $\sum_{i\in I} a_i (g_i.f) = 0$ for integers $a_i \in \Z$ and some finite set $I$. If $|I| = 1$ then $f \neq 0$ settles the claim.
If $|I| > 1$ then choose $i \in I$ so that $g_{i}$ is maximal in the ordering. Then by assumption $0 = (\sum_{j\in I} a_j (g_j.f))(g_i g') = \sum_{j\in I} a_j f(g_j^{-1}g_i g') = a_i f(g')$
since $f(g_j^{-1}g_i g') = 0$ for $j < i$ by assumption. From $f(g') \neq 0$ we deduce $a_i = 0$ and so we have a dependence over the set $I \setminus \{i\}$.
Now apply the inductive hypothesis.

Suppose $\{c_i \mid i \in I'\}$ and $\{d_i \mid i \in I\}$ are both multisets of degrees of the free-filtered module $\M$.
For $i \in I$ write $f_{c_i}(g) := \dim \R_{c_i^{-1} g}$ and similar for $f_{d_i}$.
Since we have assumed that $\R$ has invariant basis number we may suppose $I = I'$. By equation (\ref{filteredgens}) we know that the function 
\begin{eqnarray*}
f & : & G \rightarrow \Z_{\geq 0}\\
& & g \longmapsto \dim \M_g
\end{eqnarray*}
is equal to $\sum_{i \in I} f_{c_i} = \sum_{i \in I} f_{d_i}$ and so $\sum_{i \in I} (f_{c_i} - f_{d_i}) = 0$ is a linear dependence between the $G$-translates
of $f \in F$. By the first paragraph the dependence is trivial, which proves (i). Part (ii) is proven similarly using equation (\ref{gradeddims}) instead of (\ref{filteredgens}).  
\end{proof}

Before we proceed we shall need a technical lemma.
\begin{lem}\label{technical}
Let $\R$ be a filtered $\k$-algebra and let $\M_1, \M_2$ be finite, free-filtered $\R$-modules.
Let $D_1$ and $D_2$ denote the multisets of degrees of any free-filtered basis for $\M_1$ and $\M_2$. 
Suppose also that $\Psi : \M_1 \rightarrow \M_2$ is injective (resp. surjective) of degree $d$. Then it is bijective if and only if $D_1d = D_2$.
\end{lem}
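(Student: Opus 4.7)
The plan is a dimension count based on equation~(\ref{filteredgens}). For $i \in \{1,2\}$ and $g \in G$, the free-filtered basis expansion gives
\[
\dim_\k \M_{1,g} \;=\; \sum_{c \in D_1} \dim_\k \R_{g c^{-1}}, \qquad \dim_\k \M_{2,gd} \;=\; \sum_{c \in D_2} \dim_\k \R_{g d c^{-1}},
\]
so that $D_1 d = D_2$ as multisets immediately yields the equality $\dim_\k \M_{1,g} = \dim_\k \M_{2,gd}$ for every $g$, via the re-indexing $c \mapsto cd$. For the converse direction, I would invoke the $G$-linear independence argument developed in the first paragraph of the proof of Lemma~\ref{multisetswelldefined}, applied to the $G$-translates of $f_\R(g) := \dim_\k \R_g$, which forces two sums of this form to determine the same multiset of shifts.

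For the injective case, the degree hypothesis restricts $\Psi$ to an injective linear map $\Psi_g \colon \M_{1,g} \hookrightarrow \M_{2,gd}$ for each $g \in G$. If $D_1 d = D_2$, the dimensions of the source and target of $\Psi_g$ coincide by the first step, so each $\Psi_g$ is a linear isomorphism; by exhaustiveness of the filtration on $\M_2$, taking the union $\bigcup_g \Psi(\M_{1,g}) = \M_2$ yields a bijection $\Psi \colon \M_1 \to \M_2$. Conversely, assuming $\Psi$ is bijective, I would combine injectivity of each $\Psi_g$ with surjectivity of $\Psi$ and the degree condition to see that each $\Psi_g$ is itself a bijection onto $\M_{2,gd}$; then dimensions match piecewise and the first step delivers $D_1 d = D_2$.

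The surjective case is entirely parallel: the hypothesis supplies a surjection $\Psi_g \colon \M_{1,g} \twoheadrightarrow \M_{2,gd}$ on each piece, whence $\dim_\k \M_{1,g} \ge \dim_\k \M_{2,gd}$, and the mirror-image argument -- equality of dimensions upgrades each $\Psi_g$ to an isomorphism, which assembles into bijectivity of $\Psi$, and conversely -- gives the equivalence.

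The main obstacle will be the converse implication ``$\Psi$ bijective $\Rightarrow D_1 d = D_2$'': one has to confirm that bijectivity of $\Psi$ as a module map really does induce bijectivity on each filtered restriction $\Psi_g$, rather than merely a total bijection whose preimages might leak across filtered degrees. This is where the $\k$-finite, discrete and exhaustive structure of the filtration, together with the tightness of the degree $d$, is essential for ruling out any such slack and for legitimising the Hilbert-series style comparison that pins down the multisets.
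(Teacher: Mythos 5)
Your proposal takes essentially the same dimension-counting route as the paper's very terse proof: both reduce the question, via equation~(\ref{filteredgens}), to the observation that the multiset $D_j$ determines the function $g \mapsto \dim_\k (\M_j)_g$, and both appeal (you explicitly, the paper only implicitly) to the $G$-translate independence argument of Lemma~\ref{multisetswelldefined} to recover the multiset from that function. Where you diverge is in trying to justify the opening assertion, which the paper simply states without argument: that under the hypotheses, $\Psi$ is bijective if and only if $\dim(\M_1)_g = \dim(\M_2)_{gd}$ for all $g$.

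You correctly flag the forward implication as the delicate one, but your proposed fix --- combining injectivity of each $\Psi_g$ with surjectivity of $\Psi$ and the degree condition to conclude that each $\Psi_g$ is a bijection onto $(\M_2)_{gd}$ --- does not go through: surjectivity of $\Psi$ as a module map does not restrict to surjectivity on each filtered piece, precisely because (as you yourself worry) preimages can leak across filtration degrees. Concretely, take $\R = \k[x]$ with its standard $\Z$-filtration, $\M_1$ free-filtered with basis in degrees $\{0,2\}$, $\M_2$ free-filtered with basis in degrees $\{0,1\}$, and $\Psi$ the $\R$-linear map sending the first basis to the second in order. Then $\Psi$ is bijective and of filtered degree $0$, yet $D_1 \cdot 0 = \{0,2\} \neq \{0,1\} = D_2$ and $\dim(\M_1)_g < \dim(\M_2)_g$ for all $g \geq 1$. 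So the ``only if'' direction genuinely requires more than bijectivity of $\Psi$ as a module map --- one needs $\Psi$ to be an isomorphism in the filtered sense, i.e.\ with $\Psi^{-1}$ of degree $d^{-1}$, or equivalently $\gr\Psi$ an isomorphism. Your instinct to isolate this step as the main obstacle is sound, but neither your hand-wave nor the paper's one-line assertion actually closes it, and as written the ``only if'' implication needs this strictness hypothesis to be added.
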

\begin{proof}
Under these hypotheses $\Psi$ is bijective if and only if $\dim (\M_1)_g = \dim (\M_2)_{gd}$ for all $g\in G$. It follows from
equation~(\ref{filteredgens}) that for $j =1,2$, $\dim (\M_j)_g = \sum_{i\in I} \dim \R_{g d_i^{-1}}$ which is completely determined by the multiset
$(d_i \mid i\in I)$ of degrees. This last claim requires the filtration to be discrete. 
\end{proof}

\subsection{$\Hom$ spaces between free-filtered modules}\label{filtHom}

Here we assume that $\R$ is a non-negatively filtered $\k$-algebra and that $\M$ is a finite, free-filtered module with basis $m_1,...,m_n$
in filtered degree $d_1,...,d_n \in G$ respectively. We are interested in the space $\Hom_\R(\M, \R)$

There is a natural filtration on $\Hom_\R(\M, \R)$ defined by
$$\Hom_\R(\M,\R)_g := \{ \phi : \M\rightarrow \R \mid \phi(\M_{h}) \subseteq \R_{g h} \text{ for all } h \in G \}.$$

We define special elements $\phi_1,...,\phi_n \in \Hom_\R(\M, \R)$ by $$\phi_i(m_j) = \delta_{i,j}$$ and extending by $\R$-linearity.
The next result is straightforward.
\begin{lem}\label{homfilt}
We have $\Hom_\R(\R m_i, \R) = \R \phi_i$ for each $i$, and $$\Hom_\R(\M, \R) = \bigoplus_{i=1}^n \Hom_\R(\R m_i, \R)$$ is a free-filtered
$\R$-module with free-filtered basis $\phi_1,...,\phi_n$ lying in degrees $d_1^{-1},...,d_n^{-1}$.$\hfill \qed$
\end{lem}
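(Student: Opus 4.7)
The plan is to peel off the definitions and reduce everything to filtered arithmetic; the only substantive step is the computation of the degree of each $\phi_i$.

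First, since $m_1,\ldots,m_n$ is an $\R$-basis of $\M$, I have the decomposition $\M = \bigoplus_{i=1}^n \R m_i$ as left $\R$-modules, and applying $\Hom_\R(-,\R)$ (which converts finite direct sums into direct sums) yields $\Hom_\R(\M,\R) = \bigoplus_{i=1}^n \Hom_\R(\R m_i,\R)$. Evaluation at $m_i$ gives an isomorphism $\Hom_\R(\R m_i,\R) \xrightarrow{\sim} \R$, $\phi \mapsto \phi(m_i)$, which sends $\phi_i$ to $1$. This establishes $\Hom_\R(\R m_i,\R) = \R\phi_i$ and the decomposition claimed in the lemma.

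The main step is to pin down the filtered degree of $\phi_i$ and then verify the free-filtered basis condition. Using equation (\ref{filteredgens}) applied to $\M$, we compute
\[ \phi_i(\M_h) \;=\; \phi_i\Bigl(\bigoplus_{j} \R_{h d_j^{-1}} m_j\Bigr) \;=\; \R_{h d_i^{-1}}, \]
so $\phi_i \in \Hom_\R(\M,\R)_g$ is equivalent to $\R_{hd_i^{-1}} \subseteq \R_{gh}$ for every $h\in G$, which is in turn equivalent to $d_i^{-1} \leq g$; taking the minimum yields $\deg\phi_i = d_i^{-1}$. Running the same termwise calculation on a general $\phi = \sum_i \phi_i r_i$ gives $\phi(\M_h) = \sum_i \R_{hd_i^{-1}} r_i$, and then the filtration axiom $\R_a\R_b \subseteq \R_{ab}$ shows that $\phi \in \Hom_\R(\M,\R)_g$ if and only if each $r_i \in \R_{gd_i}$. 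Therefore
\[ \Hom_\R(\M,\R)_g \;=\; \bigoplus_{i=1}^n \R_{gd_i}\,\phi_i, \]
which is exactly (\ref{filteredgens}) with degrees $e_i = d_i^{-1}$, confirming that $\{\phi_i\}$ is a free-filtered basis.

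There is no genuine obstacle here; the whole lemma is bookkeeping around the definitions once one realises that the filtration on $\Hom_\R(\M,\R)$ is entirely determined by evaluating on the filtered basis $\{m_i\}$. The one thing to watch carefully is the left/right convention: $\Hom_\R(\M,\R)$ naturally inherits its $\R$-action from the target $\R$, which acts on the right, so the notation $\R\phi_i$ is to be read as the cyclic submodule generated by $\phi_i$ under that action. One should also briefly note that the filtration on $\Hom_\R(\M,\R)$ inherits the standing discreteness and $\k$-finiteness assumptions: discreteness follows since $\Hom_\R(\M,\R)_g = 0$ whenever $g < \min_i d_i^{-1}$ (using non-negativity of the filtration on $\R$), and $\k$-finiteness is immediate from the decomposition just established.
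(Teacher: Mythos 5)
Your proof is correct, and since the paper declares this result ``straightforward'' and prints it with an immediate \(\qed\) rather than a written proof, your argument is simply the natural fleshing‑out of what the authors regarded as routine. The decomposition of \(\Hom_\R(\M,\R)\) via the free basis, the computation \(\phi_i(\M_h)=\R_{hd_i^{-1}}\) forcing \(\deg\phi_i=d_i^{-1}\), and the verification that \(\Hom_\R(\M,\R)_g=\bigoplus_i \R_{gd_i}\phi_i\) matches equation (\ref{filteredgens}) with degrees \(d_i^{-1}\) are exactly the steps one would expect; your remark about the right \(\R\)-module structure on \(\Hom_\R(\M,\R)\) and the reading of \(\R\phi_i\) as the cyclic submodule is a correct and worthwhile clarification, and the ``only if'' direction of your membership criterion quietly uses \(1\in\R_{1_G}\) (specializing to \(h=d_i\)), which is the standard and here implicit normalization for a non‑negative filtration.
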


\subsection{The associated graded of the $\Hom$-space}

We continue to assume the hypotheses of the previous subsection, setting $R = \gr \R$ and $M = \gr \M$.
Observe that both $M$ and $\Hom_R(M, R)$ are $R$-modules.
According to Lemma~\ref{homfilt} the space $\Hom_\R(\M,\R)$
is a filtered $\R$-module, and so we may define the graded $R$-module $\gr \oH$.
\begin{lem}\label{identifygraded}
\begin{enumerate}
\item{The space $\Hom_{R}(M, R)$ is naturally graded;}
\item{There exists a natural isomorphism of graded $R$-modules $$\gr \Hom_\R(\M,\R) \overset{\sim}{\longrightarrow} \Hom_{R}(M, R).$$}
\end{enumerate}
\end{lem}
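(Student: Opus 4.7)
The plan is to first equip $\Hom_R(M,R)$ with its standard grading coming from the free-graded structure of $M$, then construct an explicit symbol-style map $\sigma:\gr\Hom_\R(\M,\R)\to\Hom_R(M,R)$, and finally observe that $\sigma$ sends the free-filtered basis provided by Lemma~\ref{homfilt} to the homogeneous dual basis coming from Lemma~\ref{fgvsff}, hence must be an isomorphism.

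For part (1), I would note that by Lemma~\ref{fgvsff} the module $M$ is free-graded over $R$ with homogeneous basis $\om_1,\dots,\om_n$ in degrees $d_1,\dots,d_n$. Declare a morphism $\op:M\to R$ to be homogeneous of degree $g$ when $\op(M^h)\subseteq R^{gh}$ for every $h\in G$, and set $\Hom_R(M,R)^g$ to be the space of such morphisms. Because $M$ is finitely generated and free-graded, any $R$-linear map $M\to R$ is uniquely determined by its values on $\om_1,\dots,\om_n$, and decomposing each $\op(\om_i)\in R$ into its homogeneous components produces a decomposition of $\op$ into homogeneous morphisms; this realises $\Hom_R(M,R)$ as $\bigoplus_{g\in G}\Hom_R(M,R)^g$, and exhibits it as free-graded on the dual basis $\op_1,\dots,\op_n$ in degrees $d_1^{-1},\dots,d_n^{-1}$.

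For part (2), the key construction is the symbol map. Given $\phi\in\Hom_\R(\M,\R)_g$, by definition $\phi(\M_h)\subseteq\R_{gh}$ for every $h\in G$; using that multiplication by $g$ preserves the order on $G$, we have $\phi(\M_{h'})\subseteq\R_{gh'}\subseteq\sum_{h''<gh}\R_{h''}$ for all $h'<h$, so $\phi$ descends to an $R$-linear map $M^h\to R^{gh}$ for every $h$. Assembling these produces an element $\sigma(\phi)\in\Hom_R(M,R)^g$, and if $\phi\in\Hom_\R(\M,\R)_{g'}$ for some $g'<g$ then $\phi(\M_h)\subseteq\R_{g'h}$ is killed in $R^{gh}$, so $\sigma$ descends to a well-defined graded $R$-linear map
$$\sigma:\gr\Hom_\R(\M,\R)\longrightarrow\Hom_R(M,R).$$
Naturality and $R$-linearity are routine.

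Finally, to show $\sigma$ is an isomorphism I will unwind it on the free-filtered basis. By Lemma~\ref{homfilt} the elements $\phi_1,\dots,\phi_n$ form a free-filtered basis with $\deg\phi_i=d_i^{-1}$, so their images in $\gr\Hom_\R(\M,\R)$ form a homogeneous $R$-basis in those same degrees. By construction $\sigma(\phi_i)$ is the unique degree $d_i^{-1}$ homogeneous morphism sending $\om_j\mapsto\delta_{ij}$, which is precisely $\op_i$; thus $\sigma$ maps a homogeneous $R$-basis bijectively onto a homogeneous $R$-basis, and is therefore an isomorphism of graded $R$-modules. The only place where care is needed is ensuring that the descent of $\phi$ to $M^h\to R^{gh}$ is genuinely well-defined, which is exactly where the ordered-group hypothesis enters; once that is in hand, the identification of bases makes the remainder of the argument essentially formal.
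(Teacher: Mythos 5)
Your proof is correct, but it takes a genuinely different route from the paper's. The paper argues purely by comparing isomorphism classes: it invokes Lemma~\ref{homfilt} and Lemma~\ref{fgvsff} to see that both $\gr\Hom_\R(\M,\R)$ and $\Hom_R(M,R)$ are finite free-graded $R$-modules, notes that in both cases the degree multiset of the homogeneous generators is $d_1^{-1},\dots,d_n^{-1}$, and then appeals to the fact that a free-graded module is determined up to graded isomorphism by its degree multiset. No explicit map is produced. You, by contrast, construct the symbol map $\sigma:\gr\Hom_\R(\M,\R)\to\Hom_R(M,R)$ directly from the definition of the filtration on $\Hom_\R(\M,\R)$, verify that the ordered-group hypothesis makes it well-defined on associated graded pieces, and then identify $\sigma(\phi_i)$ with the homogeneous dual basis element $\op_i$, so that $\sigma$ carries a homogeneous basis to a homogeneous basis. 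Your approach is longer but arguably more faithful to the word ``natural'' in the statement: the paper's argument only establishes that an isomorphism exists (it is independent of the choice of basis only after the fact), whereas your symbol map is basis-free by construction and one can see at a glance that it is functorial in $\M$. The trade-off is that the paper's version is essentially a one-line consequence of lemmas already in hand, while yours requires the explicit well-definedness check you correctly flag as the crux. Both arguments rely on the same three ingredients --- Lemma~\ref{homfilt}, Lemma~\ref{fgvsff}, and the fact that multiplication by a fixed $g\in G$ preserves the order --- just deployed differently.
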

\begin{proof}
By Lemma~\ref{homfilt} and Lemma~\ref{fgvsff}, $\gr \Hom_\R(\M,\R)$ is free-graded and using the graded version of Lemma~\ref{homfilt} we see that $\Hom_{R}(M, R)$ is free-graded.
This also shows that the degrees of the free-graded generators are equal to $d_1^{-1},...,d_n^{-1}$ where $d_1,...,d_n$ is the sequence of degrees of $\M$ as a free-filtered $\R$-module.
The isomorphism class (as a graded module) of a free-graded module is entirely determined by the multiset of degrees of the generators, hence the result.
\end{proof}

\section{Frobenius extensions, Nakayama automorphisms and Rees algebras}\label{frobsec}

\subsection{Free Frobenius extensions}\label{freefrobenius}
Let $\S \subseteq \R$ be $\k$-algebras. The space $\Hom_\S(\R, \S)$ is a $\R$-$\S$-bimodule by the rules
\begin{eqnarray*}
(f.s)(r) := f(r)s;\\
(q.f)(r) := f(r q)
\end{eqnarray*}
where $q,r \in \R$, $s \in \S$ and $f \in \Hom_\S(\R, \S)$. Similarly, $\Hom_\S(\R,\S)$ is an $\S$-$\R$-bimodule.
We say that the inclusion $\S \subseteq \R$ is a \emph{Frobenius extension} if
\begin{enumerate}
\item[(i)]{$\R$ is a projective (left) $\S$-module;}
\item[(ii)]{the $\R$-$\S$-bimodules $\R$ and $\Hom_\S(\R,\S)$ are isomorphic.}
\end{enumerate}
Cases where $\R$ is a finite free $\S$-module are prolific in modern algebra (see \cite[III.4]{BG02} for an overview) and these so-called
\emph{free Frobenius extensions} shall be our main object of study. Since we always assume that our rings have invariant basis number
we may define \emph{the rank of a free Frobenius extension} to be the rank of $\R$ as an $\S$-module.

These extensions generalise the concept of Frobenius algebras over $\k$, which are precisely the free Frobenius extensions where $\S = \k$.
The theory was originally motivated by the extremely nice duality properties exhibited in the representation theory of Frobenius algebras.
For example, the injective and projective modules coincide for such algebras \cite[Theorem~62.3]{CR66}.

\subsection{The Frobenius form}\label{thefrobform}
It is well known that Frobenius algebras over $\k$ are characterised by the existence of a 1-form $\R \rightarrow \k$ such that the kernel contains no
non-zero proper ideals. We shall work with a similar characterisation of free Frobenius extensions which was observed by Nakayama and Tsuzuku.
\begin{lem}\label{characterisation} \cite{NT59}
A finite, free extension $\S\subseteq \R$ is Frobenius if and only if there exists a map $\Phi : \R\rightarrow \S$ of left $S$-modules such that
\begin{enumerate}
\item[(F1)]{the kernel of $\Phi$ contains no proper (left or right) ideals;}
\item[(F2)]{the assignment
\begin{eqnarray*}
\R \rightarrow \Hom_\S(\R, \S)\\
r \mapsto (q \mapsto \Phi(rq))
\end{eqnarray*}
is surjective.}
\end{enumerate}
\end{lem}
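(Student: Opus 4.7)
The plan is to establish a bijective correspondence between $\R$-$\S$-bimodule isomorphisms $\psi : \R \xrightarrow{\sim} \Hom_\S(\R,\S)$ and maps $\Phi : \R \to \S$ satisfying (F1) and (F2).

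For the forward direction, given such a $\psi$ I would set $\Phi := \psi(1_\R)$. As an element of $\Hom_\S(\R,\S)$ this $\Phi$ is automatically a left $\S$-module map. Using the left $\R$-action formula $(q.f)(r) = f(rq)$ from Section~\ref{freefrobenius}, one computes $\psi(r) = \psi(r \cdot 1_\R) = r.\psi(1_\R) = r.\Phi$, so that $\psi(r)(q) = \Phi(qr)$; this identifies the assignment of (F2) with $\psi$ itself (up to the convention choice between $rq$ and $qr$), and surjectivity of $\psi$ becomes immediate. For (F1), if $I \subseteq \ker \Phi$ were a non-zero one-sided ideal, picking $0 \neq x \in I$ would give all the appropriate $\R$-multiples of $x$ lying in $I \subseteq \ker \Phi$, so that $\psi(x)(q) = 0$ for every $q$, contradicting the injectivity of $\psi$.

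For the reverse direction, given $\Phi$ satisfying (F1) and (F2), I would define $\psi : \R \to \Hom_\S(\R,\S)$ by the formula appearing in (F2). The left $\S$-linearity of $\Phi$ together with the explicit action formulas from Section~\ref{freefrobenius} ensures that each $\psi(r)$ lies in $\Hom_\S(\R,\S)$ and that $\psi$ is an $\R$-$\S$-bimodule homomorphism. Surjectivity of $\psi$ is (F2) by construction, while injectivity follows from (F1): if $\psi(r) = 0$ then a principal one-sided ideal of $\R$ generated by $r$ lies inside $\ker \Phi$, forcing $r = 0$. Together with the freeness of $\R$ over $\S$, this bimodule isomorphism yields both halves of the definition of Frobenius extension.

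I do not anticipate a serious obstacle: the main delicacy is bookkeeping the left/right sides of the bimodule actions in (F1) and (F2), which is resolved unambiguously by the conventions fixed in Section~\ref{freefrobenius}. The argument is essentially a formal unpacking of the two characterisations of the Frobenius property, and is the reason Nakayama and Tsuzuku's reformulation is so useful in practice.
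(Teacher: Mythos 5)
Your approach is the same as the paper's: both directions follow by identifying $\R$-$\S$-bimodule isomorphisms $\psi : \R \to \Hom_\S(\R,\S)$ with forms $\Phi = \psi(1)$, and the paper's own proof is exactly as brief (it defers the details to \cite{NT59}). You were right to be alert to the $rq$ versus $qr$ issue: with the bimodule rule $(q.f)(r) = f(rq)$ of \textsection\ref{freefrobenius} one gets $\psi(r)(q) = (r.\Phi)(q) = \Phi(qr)$, and indeed only $q \mapsto \Phi(qr)$ is automatically a left $\S$-module map (since $\Phi(sqr) = s\Phi(qr)$), whereas $q \mapsto \Phi(rq)$ generally is not unless $\S$ is central. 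So your computation is the one consistent with the paper's conventions, and the formula in (F2) as printed is best read with $qr$ in place of $rq$.

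There is, however, a gap in your argument for (F1) that the paper's sketch also elides. Injectivity of $\psi$ gives you directly that no non-zero \emph{left} ideal lies in $\Ker\Phi$: if $I$ is a left ideal and $x\in I$, then $qx\in I\subseteq\Ker\Phi$ for all $q$, hence $\psi(x)=0$ and $x=0$. But the statement of (F1) also excludes right ideals, and for a right ideal $I$ and $x\in I$ you only know $\Phi(xq)=0$ for all $q$, which is a statement about $x.\Phi$ in the $\S$-$\R$-bimodule structure and does not say $\psi(x)=0$. The equivalence of the two non-degeneracy conditions is genuine content: it is where finiteness and freeness enter (for instance via a dual-basis argument, or by noting that the analogous $\S$-$\R$-bimodule map $\R \to \Hom_\S({}_\S\R,\S)$ is also an isomorphism for a finite free Frobenius extension). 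Your phrase "all the appropriate $\R$-multiples'' papers over this asymmetry; as written, the argument only establishes the left-ideal half of (F1). To match the lemma precisely you would need to supply the right-handed non-degeneracy, which is the nontrivial part of the Nakayama--Tsuzuku result.
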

The form $\Phi$ shall be called the \emph{Frobenius form} of the extension. The proof of the lemma is easy to describe: supposing $\R$ 
is isomorphic to $\Hom_\S(\R,\S)$ as an $\R$-$\S$-bimodule, the unit $1\in \R$ induces a special element of $\Hom_\S(\R,\S)$ satisfying (1) and (2).
Conversely, the assignment described in (2) is certainly injective whenever (1) holds. For more detail see \cite[pp.11]{NT59}.

\begin{ex}
Some classical examples of Frobenius extensions are:
\begin{itemize}
\item{the enveloping algebra of a restricted Lie algebra in characteristic $p > 0$, viewed as an extension of the $p$-centre \cite[Proposition~1.2]{FP88};}
\item{the quantised enveloping algebra at an $\ell$th root of unity, viewed as an extension of its $\ell$-centre \cite{Ku94};}
\item{more generally, for every PI Hopf triple $Z_0 \subseteq Z \subseteq R$ the extension $Z_0 \subseteq R$ is Frobenius \cite[Corollary~III.4.7]{BG02}.
Notice that this example subsumes the previous two.}
\item{Numerous examples were discussed by Brown--Gordon--Stroppel in \cite{BGS06}. Finding machinery which could be applied to large
families of examples simultaneously was a motivating goal of the current article. }
\end{itemize}
\end{ex}

\subsection{Free-filtered and free-graded Frobenius extensions}\label{freegrad}

We continue with a totally ordered, finitely generated abelian group $G$. We remind the reader that all gradings and filtrations in this article are
$\k$-finite, proper, discrete and exhaustive. Let $\R$ be a non-negatively filtered $\k$-algebra and $\S \subseteq \R$ a subalgebra. 
\begin{defn}
We call $\S \subseteq \R$ a \emph{free-filtered Frobenius extension}
if $\R$ is a free-filtered $\S$-module and $\S \subseteq \R$ is a Frobenius extension.
\end{defn}

Now let $\R$ be a graded $\k$-algebra and suppose the subalgebra $\S \subseteq \R$ inherits the grading.
When $\R$ is a finitely generated $\S$-module the space $\Hom_\S(\R, \S)$ is graded (\cite[Theorem~1.2.6]{Ha15}).
\begin{defn}
We say that $\S \subseteq \R$ is a \emph{free-graded Frobenius extension}
if $\S \subseteq \R$ is a free Frobenius extension with Frobenius form $\Phi : \R \rightarrow \S$ such that 
\begin{enumerate}
\item[(GF1)] {$\Phi$ is homogeneous;}
\item[(GF2)] {$\R$ is a free-graded (left) $\S$-module.}
\end{enumerate}
In this situation, $\Phi$ shall be called the \emph{homogeneous Frobenius form} of the extension.
\end{defn}

\begin{rem}\label{altdefremark}
\begin{itemize}
\item[(i)] When a graded algebra $\R$ is a finite free-graded module over the subalgebra $\S$ and $\Phi : \R \rightarrow \S$ is a homogeneous
homomorphism of left $\S$-modules, axiom (F1) is equivalent to 
\begin{enumerate}
\smallskip
\item[(F1$^\prime$)]{$\Ker(\Phi)$ contains no proper graded ideals.}
\smallskip
\end{enumerate}
\item[(ii)] One remarkable corollary of our transfer result is that an extension $\S \subseteq \R$ is a free-graded Frobenius extension if and only if it a Frobenius extension and $\R$ is a free-graded $\S$-module (Corollary~\ref{simplifydefn}).
\end{itemize}
\end{rem}
\subsection{Invariants of filtered and graded Frobenius extensions}

The \emph{rank of a Frobenius extension} $\S \subseteq \R$ is defined to be the rank of $\R$ as an $\S$-module.
If $\S \subseteq \R$ is actually a free-filtered (resp. free-graded) extension then we define the \emph{degree of the Frobenius extension}
to be filtered (resp. graded) degree of any choice of (homogeneous) Frobenius form $\Phi : \R \rightarrow \S$.
\begin{lem}
The degree of a free-filtered or free-graded Frobenius extension is a well-defined invariant.
\end{lem}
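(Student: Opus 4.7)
The plan is to pin down the degree of any Frobenius form $\Phi : \R \to \S$ purely from intrinsic data of the extension, so that it cannot depend on the particular choice of $\Phi$. The intrinsic data I have in mind is the multiset $D = (d_1, \ldots, d_n)$ of degrees of a free-filtered (resp.\ free-graded) $\S$-basis of $\R$, which by Lemma~\ref{multisetswelldefined} is already known to be an invariant of the extension.

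First I would check that if $\Phi$ has filtered degree $d$, then the bimodule isomorphism $\Psi : \R \to \Hom_\S(\R, \S)$, $r \mapsto (q \mapsto \Phi(rq))$ supplied by Lemma~\ref{characterisation} is itself a filtered map of degree $d$. Indeed, for $r \in \R_g$ and $q \in \R_h$ we have $rq \in \R_{gh}$, so $\Phi(rq) \in \S_{ghd} = \S_{(gd)h}$, which by the definition of the filtration on $\Hom_\S(\R,\S)$ in \textsection\ref{filtHom} means precisely that $\Psi(r) \in \Hom_\S(\R,\S)_{gd}$.

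Next, Lemma~\ref{homfilt} identifies $\Hom_\S(\R, \S)$ as a free-filtered $\S$-module whose multiset of basis degrees is $D^{-1} := (d_1^{-1}, \ldots, d_n^{-1})$. Since $\Psi$ is a bijection (by Lemma~\ref{characterisation}) which is filtered of degree $d$, Lemma~\ref{technical} forces $Dd = D^{-1}$ as multisets in $G$. The total ordering on $G$ now rigidifies $d$: translation by $d$ preserves the ordering while inversion reverses it, so $\min(Dd) = d \cdot \min(D)$ and $\min(D^{-1}) = (\max D)^{-1}$. Equating these gives the closed formula $d = (\min(D) \cdot \max(D))^{-1}$, so $d$ is determined by $D$ alone.

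For the graded case the same argument transfers verbatim via the graded analogues of Lemmas~\ref{homfilt} and~\ref{technical}; homogeneity of the Frobenius form (axiom (GF1)) ensures $\Psi$ is a genuine graded morphism. There is no real obstacle here — the only step that deserves care is unpacking the definition of the filtration on $\Hom_\S(\R, \S)$ to verify that the degrees of $\Phi$ and $\Psi$ coincide.
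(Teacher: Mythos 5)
Your proof is correct and follows essentially the same route as the paper: observe that $\Psi$ has the same filtered degree $d$ as $\Phi$, use Lemma~\ref{homfilt} and Lemma~\ref{technical} to extract the multiset identity $Dd = D^{-1}$, and conclude that this pins down $d$. The only cosmetic difference is that the paper invokes boundedness of the two multisets to assert the equation $D_1 d = D_2$ has at most one solution, whereas you make this explicit by comparing minima to obtain the closed formula $d = (\min(D)\cdot\max(D))^{-1}$ — a slightly sharper statement of the same observation. The paper also handles the graded case by remarking it is a special instance of the filtered case (every grading induces a filtration), which is marginally cleaner than re-running the argument with graded analogues, but this is a matter of taste.
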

\begin{proof}
The graded claim may be seen as a special case of the filtered claim, and so we prove the latter.
Observe that if $\Phi : \R \rightarrow \S$ is a homomorphism of left $\S$-modules of filtered degree $d$ then the map
$\Psi: \R \rightarrow \Hom_\S(\R,\S)$ given by $r \mapsto (q \mapsto \Phi(rq))$ is also of degree $d$.
Note that both $\R$ and $\Hom_\S(\R,\S)$ are free-filtered $\S$-modules and write $D_1$, $D_2$ for their
multisets of filtered degrees (these are well-defined thanks to Lemma~\ref{multisetswelldefined}). In Lemma~\ref{technical} we observed that $D_1 d = D_2$. Since $D_1$ and $D_2$
are bounded this equation holds for at most one $d \in G$, hence $d = \deg(\Phi)$ is determined by $\S\subseteq \R$.
\end{proof}

\subsection{Example: quantum affine space}\label{qas}
Since graded Frobenius extensions have not been studied explicitly before, we shall provide one detailed example.

Fix $\ell, n \in \N$ and an $n\times n$ matrix $q = (q_{i,j})_{1\leq i,j\leq n}$ with entries in $\k$ satisfying
$q_{i,j} = q_{j,i}^{-1}$ whenever $i\neq j$ and $q_{i,i} = 1$. We also suppose $q_{i,j}^\ell = 1$ for all $i,j$.
Then $A = \k_q[\A^n] := \k_q[x_1,...,x_n]$ shall denote the $n$-dimensional quantum affine space with generators
$x_1,...,x_n$ satisfying $x_i x_j = q_{i,j} x_{j} x_i$. Let $G$ be any abelian group
and choose elements $d_1,...,d_n \in G$. We view $A$ as a $G$-graded algebra by declaring that each $x_i$ lies in degree $d_i$.
The degree of any homogeneous element $a\in A$ is written $\deg(a)$.

The $\ell$-centre of $A$ is the graded (unital) subalgebra generated by elements $x_1^\ell,...,x_n^\ell$, and is
denoted $Z_0$. For $\ua = (a_1,...,a_n) \in \Z_{\geq 0}^n$, the monomial $x_1^{a_1} \cdots x_n^{a_n}$ in $A$
shall be denoted $x^\ua$. Consider the set of \emph{restricted monomials} $$\B^\res := \{x^\ua \, \mid\,  0 \leq a_i < \ell \text{ for all } i\}$$
and notice that $A$ is a free $Z_0$-module with basis $\B^\res$. The index $\uell = (\ell-1,....,\ell-1)$ plays a special role as follows.
We have a graded $Z_0$-module decomposition $A = \bigoplus_{\ua\in \B^\res} Z_0 x^\ua$ and we let $\Phi$ denote the projection onto
the factor corresponding to $x^\uell$, which is a homogeneous homomorphism of $Z_0$-modules.
\begin{prop}\label{freegradedqas}The following hold:
\begin{enumerate}
\item{$A$ is a free-graded Frobenius extension of $Z_0$;}
\item{$\Phi$ is a homogeneous Frobenius form;}
\item{the degree of the extension is $-\sum (\ell-1) d_i$;}
\item{the rank of the extension is $\ell^{n}$.}
\end{enumerate}
\end{prop}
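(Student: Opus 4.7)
The plan is to verify all four claims by a single bookkeeping argument on the multiplication rule in $A = \k_q[\A^n]$. Since $q_{i,j}^\ell = 1$ for all $i,j$, the subalgebra $Z_0$ is in fact central in $A$, and the restricted monomials $\B^\res$ form a homogeneous $Z_0$-basis, so (GF2) holds and the rank claim (4) is immediate from $|\B^\res|=\ell^n$. The form $\Phi$ is homogeneous by construction (it sends $x^\uell$ to $1$ and annihilates the other restricted monomials), so from $\deg(x^\uell)=(\ell-1)\sum d_i$ the degree of $\Phi$ must equal $-\sum(\ell-1)d_i$, yielding (3).

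The main computational input is a normal form for products of restricted monomials. Using $x_ix_j=q_{i,j}x_jx_i$ together with $q_{i,j}^\ell=1$, for $\ua,\ub \in \B^\res$ one finds
$$x^\ua x^\ub \;=\; c_{\ua,\ub}\, z_{\ua,\ub}\, x^{\overline{\ua+\ub}},$$
where $c_{\ua,\ub} \in \k^*$, $z_{\ua,\ub} = \prod_i x_i^{\ell\lfloor (a_i+b_i)/\ell\rfloor}\in Z_0$, and $\overline{\ua+\ub}$ denotes entrywise reduction mod $\ell$. The crucial consequence is that $\Phi(x^\ua x^\ub)\in\k^*$ precisely when $a_i+b_i=\ell-1$ for every $i$ (i.e.\ when $\ub=\uell-\ua$), and vanishes otherwise.

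From this both Frobenius axioms follow quickly. For (F1), any nonzero $f=\sum_\ua f_\ua x^\ua$ with $f_\ua\in Z_0$ has some coefficient $f_{\ua_0}\neq 0$, and centrality of $Z_0$ together with the normal form give $\Phi(x^{\uell-\ua_0}f)=c_{\uell-\ua_0,\ua_0}f_{\ua_0}\neq 0$; the symmetric right-multiplication argument shows no nonzero right ideal of $A$ lies in $\Ker\Phi$ either. For (F2), setting $r_\ua := c_{\uell-\ua,\ua}^{-1}x^{\uell-\ua}$ one computes $\Phi(r_\ua x^\ub)=\delta_{\ua,\ub}$, so the assignment $r\mapsto (q\mapsto\Phi(rq))$ hits the dual $Z_0$-basis of $\Hom_{Z_0}(A,Z_0)$ provided by the graded version of Lemma~\ref{homfilt}, and is therefore surjective. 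The only real obstacle is bookkeeping the $q$-scalars $c_{\ua,\ub}$; once the normal form is in place everything is formal, and it is the single observation that only $\ub=\uell-\ua$ contributes to $\Phi$ that drives both axioms simultaneously.
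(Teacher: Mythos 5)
Your proof is correct and follows essentially the same strategy as the paper: both (F1) and (F2) are verified from the single observation that, for $\ua,\ub \in \B^\res$, the product $x^\ua x^\ub$ contributes to $\Phi$ precisely when $\ub = \uell - \ua$. The paper phrases this as (right) multiplication by $x^\uc$ permuting the summands of $A = \bigoplus_\ua Z_0 x^\ua$, whereas you make the underlying normal form $x^\ua x^\ub = c_{\ua,\ub}\, z_{\ua,\ub}\, x^{\overline{\ua+\ub}}$ and the resulting scalars explicit, which is a cosmetic rather than structural difference.
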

\begin{proof}
It is easy to see that $A$ is a free-graded $Z_0$-module of rank $\ell^n$ with basis $\B^\res$, and that $\Phi$ is a 
homogeneous $Z_0$-equivariant map of the requisite degree, so it will suffice to confirm axioms (F1) and (F2)
for $\Phi$.

Suppose that
$$\Ker(\Phi) = \bigoplus_{x^\ua \in \B^\res ; \ua \neq \uell } Z_0 x^\ua$$
contains a right ideal $J$. Let $0\neq y\in J$. We can write $y= \sum z_\ua x^\ua$ for coefficients $z_\ua \in Z_0$,
and suppose $z_{\ub} \neq 0$ for some fixed $\ub = (b_1,...,b_n)$. Right multiplication by any monomial $x^\uc$
permutes the summands in the decomposition $A = \bigoplus Z_0 x^\ua$ and so if we let $\uc = (\ell-1-b_1,...,\ell-1-b_n)$
then $\Phi(y x^{\uc}) \neq 0$, contradicting the fact that $y x^{\uc} \in J \subseteq \Ker(\Phi)$. Similarly,
$\Ker(\Phi)$ contains no left ideals, using the decomposition of $A$ as a free right module over $Z_0$.

In order to see that $y \mapsto (z \mapsto \Phi(yz))$ is surjective $A \twoheadrightarrow \Hom_{Z_0}(A, Z_0)$ we show that
for each $\ub$ the projection $\sum z_\ua x^\ua \mapsto z_{\ub}$ lies in the image. The result will follow since $\Hom_{Z_0}(A, Z_0)$
is a free $Z_0$-module generated by these projections. Take $\uc := \uell - \ub$ as above
and observe that $y \mapsto \Phi(y x^{\uc})$ is a non-zero scalar multiple of the projection $\sum z_\ua x^\ua \mapsto z_{\ub}$, which completes the proof.
\end{proof}

\subsection{The Nakayama automorphism}
Let $\S \subseteq \R$ be a Frobenius extension with $\S$ central and Frobenius form $\Phi : \R \rightarrow \S$.
Thanks to \cite[Proposition~1]{NT60} we know that $\Hom_\S(\R, \S)$ is isomorphic to $\R$ as an $\R$-$\S$-bimodule and 
as an $\S$-$\R$-bimodule. In each case, $\Hom_\S(\R,\S)$ is generated by $\Phi$ as a (left or right) $\R$-module \cite[Section~1]{Ka61}.
This implies that there exists a bijection $\nu : \R \rightarrow\R$ such that $r \Phi = \Phi \nu(r)$ for all $r \in \R$,
and it is straightforward to check that this map is actually an algebra automorphism, commonly called \emph{the Nakayama automorphism}.
To phrase it another way, $\nu$ satisfies
\begin{eqnarray}\label{nakprop}
\Phi(qr) = \Phi(\nu(r) q)
\end{eqnarray}
for all $q,r \in \R$. It is clear that $\nu$ is only uniquely determined up to inner automorphism and so determines a class in $\Out(\R) = \Aut(\R) / \Inn(\R)$.
It is a useful invariant to calculate since the class of $\nu$ is trivial if and only if $(q,r) \longmapsto \Phi(qr)$ is a symmetric form $\R \times \R \rightarrow \S$
which, in turn, has rather stark consequences for the representation theory of $\R$; see \cite[Ch. IX]{CR66} for example.

\subsection{The Rees algebra of a filtered algebra}
The Rees algebra $\Rees(\R)$ of a $\Z$-filtered algebra $\R$ is a well known tool from algebraic geometry \cite[6.5]{Ei95}. Since we were unable
to find sources in the literature which work at our level of generality, we shall present some of the details here.

We continue to assume that $(G, \leq)$ is a totally ordered, finitely generated abelian group. We will consider the group algebra of $G$
and so we use multiplicative notation in $G$, writting $1_G$ for the identity element.  Let $\k G$ be the group algebra of $G$ and,
more generally, write $\k P \subseteq \k G$ for any monoid $P \subseteq G$. Let $P := \{g \in G \mid g \geq 1_G\}$ denote the \emph{positive cone of $G$},
and let $\R$ be a non-negatively filtered algebra $\R = \bigcup_{g \geq 1_G} \R_g$.
The Rees algebra of $\R$ is
$$\Rees(\R):= \bigoplus_{g\in G} \R_g \otimes g \subseteq \R \otimes_\k \k P.$$
Choose generators $x_1,...,x_n$ for $G$. After replacing some of these by their inverses we may assume $x_i \geq 1_G$ for all $i$ and so $P$
is the monoid generated by $x_1,...,x_n$. It is easy to see that the group $G$ is torsion free hence $G \cong \Z^n$ for some $n \geq 0$ by the classification
of finitely generated abelian groups, and it follows that $\Spec \k P$ is an $n$-dimensional affine space over $\k$.

The following fact is well known.
\begin{lem}
$\Rees(\R)$ is a free module over $\k P$.
\end{lem}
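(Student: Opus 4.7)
The plan is to construct an explicit $\k P$-basis of $\Rees(\R)$ by lifting a homogeneous $\k$-basis of the associated graded algebra $\gr \R$, and then verify the two basis axioms directly.

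First I would choose, for each $g \in G$, a $\k$-basis of the graded component $R^g = \R_g / \sum_{h < g} \R_h$ of $R = \gr \R$, and lift these to elements of $\R_g$. Collecting these lifts gives a set $\{b_i \mid i \in I\}$ with $b_i \in \R_{d_i}$ for some $d_i \in G$. The first step is to check that, for every $g \in G$, the subfamily $\{b_i \mid d_i \leq g\}$ is a $\k$-basis of $\R_g$. This is a standard filtered-graded argument: because the filtration is $\k$-finite and discrete each $\R_g$ is finite dimensional, so only finitely many $d_i$ satisfy $d_i \leq g$; linear independence follows from the independence of the images in $R^{d_i}$, and spanning follows by a finite downward induction inside $\R_g$, peeling off the leading graded component and using the inductive hypothesis on a strictly smaller filtered piece. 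The main obstacle in the whole proof is really this bookkeeping, since $G$ need not be well-ordered; discreteness together with $\k$-finiteness is what makes the induction finite.

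Next I claim that the set $\{b_i \otimes d_i \mid i \in I\}$ is a $\k P$-basis of $\Rees(\R)$. For spanning, take a homogeneous element $r \otimes g \in \R_g \otimes g$. By the previous paragraph, $r = \sum_{d_i \leq g} c_i b_i$ for scalars $c_i \in \k$, so
\[
r \otimes g \;=\; \sum_{d_i \leq g} c_i \, (b_i \otimes d_i)\cdot (g d_i^{-1}),
\]
and each $g d_i^{-1}$ lies in $P$ because $d_i \leq g$. Hence $\Rees(\R)$ is spanned by $\{b_i \otimes d_i\}$ over $\k P$.

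For linear independence, suppose $\sum_i (b_i \otimes d_i)\cdot p_i = 0$ with only finitely many $p_i = \sum_{g \in P} \lambda_{i,g}\, g$ nonzero. Grouping by the total weight $h \in G$, the vanishing becomes
\[
\sum_{h \in G} \Bigl( \sum_{i:\, d_i \leq h} \lambda_{i,\, d_i^{-1}h}\, b_i \Bigr) \otimes h \;=\; 0,
\]
so for each $h$ the inner sum vanishes in $\R_h$. Since $\{b_i \mid d_i \leq h\}$ was shown to be a $\k$-basis of $\R_h$, every $\lambda_{i,\, d_i^{-1}h}$ is zero, and letting $h$ vary we conclude that every coefficient $\lambda_{i,g}$ is zero, hence every $p_i = 0$. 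This gives the desired $\k P$-basis and completes the proof.
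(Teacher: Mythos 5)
Your proof is correct and follows essentially the same approach as the paper: lift a homogeneous $\k$-basis of $\gr\R$ to a filtration-adapted $\k$-basis $\{b_i\}$ of $\R$ and check that $\{b_i\otimes\deg(b_i)\}$ is a $\k P$-basis of $\Rees(\R)$. The paper states this in one sentence without verification (and with a small indexing slip, writing $\dim\R_g$ where $\dim R^g$ is meant); you supply the missing spanning and independence checks.
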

\begin{proof}
Choose a $\k$-basis $\{b_{g, j} \mid g\in G, j=1,...,\dim \R_g\}$ such that $b_{g, j} \in \R_g$ for all $j$, and observe that
$\{b_{g,j} \otimes g \mid g\in G, j= 1,...,\dim \R_g\}$ is a $\k P$-basis for $\Rees(\R)$.
\end{proof}

\subsection{Factors of the Rees algebra}\label{factRees}

We now consider reductions of $\Rees(\R)$ by maximal ideals $\m \in \Spec \k P$. 
Consider the following two canonical ideals 
\begin{eqnarray*}
& & \m_0 := (g\in P \mid g \gneq 1_G) \in \Spec \k P; \\
& & \m_1 := (g - h \mid g, h \in P,  g\geq h) \in \Spec \k P.
\end{eqnarray*}
It is not hard to see that $\m_1 = (x_1 - 1,...,x_n - 1)$.
\begin{lem}\label{reducingRees}
Continue to assume that the filtration of $\R$ is non-negative. The following hold:
\begin{itemize}
\item[(0)]{$\Rees(\R) / \m_0 \Rees(\R)  \cong \gr \R$;}
\item[(1)]{$\Rees(\R) / \m_1 \Rees(\R) \cong \R$.}
\end{itemize}
\end{lem}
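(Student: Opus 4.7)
The plan is to construct, for each $j \in \{0, 1\}$, a surjective $\k$-algebra homomorphism $\pi_j : \Rees(\R) \to M_j$ (with $M_0 := \gr \R$ and $M_1 := \R$) satisfying $\m_j \Rees(\R) \subseteq \ker \pi_j$, and then compare $\k$-bases on the two sides to force equality. I would define the maps on each summand $\R_g \otimes g \subseteq \Rees(\R)$ by $\pi_1(r \otimes g) := r \in \R$ and $\pi_0(r \otimes g) := r + \sum_{h < g} \R_h \in (\gr \R)^g$; both are immediately seen to be $\k$-algebra surjections. For the inclusions $\m_j \Rees(\R) \subseteq \ker \pi_j$, note that a generator $g' - 1_G \in \m_1$ acts by $(g' - 1_G)(r \otimes h) = r \otimes g'h - r \otimes h \mapsto r - r = 0$ under $\pi_1$; and a generator $g' > 1_G$ of $\m_0$ acts by $g' \cdot (r \otimes h) = r \otimes g'h$, and since $r \in \R_h \subseteq \sum_{h'' < g'h}\R_{h''}$ this is sent to $0 \in (\gr \R)^{g'h}$.

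For the reverse inclusions I would use a carefully chosen $\k P$-basis of $\Rees(\R)$. Pick a filtered $\k$-basis $\{e_i\}_{i \in I}$ of $\R$ in which each $e_i$ has a well-defined minimal degree $h_i \in G$ and $\{e_i : h_i \leq g\}$ is a $\k$-basis of $\R_g$ for every $g$; such a basis exists because the filtration is discrete and $\k$-finite. Non-negativity gives $h_i \in P$, so $e_i \otimes h_i \in \Rees(\R)$, and a direct computation shows $\k P \cdot (e_i \otimes h_i) = \bigoplus_{g \geq h_i} \k(e_i \otimes g)$. Combined with $\R_g = \bigoplus_{i : h_i \leq g}\k e_i$, this gives $\Rees(\R) = \bigoplus_{i \in I} \k P \cdot (e_i \otimes h_i)$ as a free $\k P$-module. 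Since $\k P / \m_j \cong \k$ in both cases, $\Rees(\R) / \m_j \Rees(\R)$ has $\k$-basis $\{[e_i \otimes h_i] \mid i \in I\}$. Finally, $\pi_1$ sends $[e_i \otimes h_i]$ to $e_i$, yielding the basis $\{e_i\}$ of $\R$, and $\pi_0$ sends $[e_i \otimes h_i]$ to $\overline{e_i} \in (\gr \R)^{h_i}$, yielding a basis of $\gr \R$ (for each $g$, $\{e_i : h_i = g\}$ maps to a basis of $(\gr \R)^g = \R_g / \sum_{h<g}\R_h$). Hence each $\pi_j$ descends to a $\k$-algebra isomorphism.

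The main technical point is verifying that $\{e_i \otimes h_i \mid i \in I\}$ is a $\k P$-basis of $\Rees(\R)$, which uses both the discreteness of the filtration (so the $h_i$ are well-defined) and its non-negativity (so $h_i \in P$, ensuring $e_i \otimes h_i$ lies inside $\Rees(\R)$).
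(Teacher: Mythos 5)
Your argument is correct and uses the same two quotient maps as the paper ($\pi_0(r\otimes g)=r+\sum_{h<g}\R_h$ and $\pi_1(r\otimes g)=r$), but you finish the argument in a genuinely different way. The paper's proof of (0) directly computes $\m_0\Rees(\R) = \bigoplus_{g\in G}(\sum_{h<g}\R_h)\otimes g$ and reads off the quotient, while for (1) it simply asserts that $\ker\pi_1$ is the right $\k P$-submodule generated by the elements $r\otimes(x_i-1)$, which implicitly uses the freeness of $\Rees(\R)$ over $\k P$ but leaves the verification to the reader. You instead treat both cases uniformly by constructing an explicit $\k P$-basis $\{e_i\otimes h_i\}$ adapted to the filtration, observing that $\k P/\m_j\cong\k$ for $j=0,1$, and matching the induced $\k$-basis of $\Rees(\R)/\m_j\Rees(\R)$ with a $\k$-basis of the target. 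This is a more careful and more symmetric route to the same conclusions: the one-sided inclusion $\m_j\Rees(\R)\subseteq\ker\pi_j$ is easy and the basis count forces equality, which is exactly the piece the paper handles more tersely. One minor point you should make explicit: the existence of a filtered $\k$-basis compatible with the filtration uses that $\sum_{h<g}\R_h$ actually stabilizes at some $\R_{g'}$ with $g'<g$ (a chain of finite-dimensional subspaces of $\R_g$ stabilizes), which is where $\k$-finiteness and discreteness enter; you flag these hypotheses but do not spell out the stabilization. Overall the proposal is sound and, if anything, supplies more justification than the published version.
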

\begin{proof}
First of all observe that $$\m_0 \Rees(\R) = \bigoplus_{g\in G} (\sum_{h > 1_G} \R_{gh^{-1}}) \otimes g = \bigoplus_{g\in G} (\sum_{h < g} \R_{h} ) \otimes g.$$
This immediately leads to (0).

Now observe that there is a homomorphism 
\begin{eqnarray*}
& & \Rees(\R) \longrightarrow \R;\\
& & \sum_{g\in G} r_g \otimes g \longmapsto \sum_{g\in G} r_g.
\end{eqnarray*}
The kernel is generated as a right $\k P$-module by elements $r \otimes (x_i - 1)$ with $i = 1,...,n$ and $r \in \R$. This proves (1).
\end{proof}
\begin{rem}
To complete the picture is is worth mentioning that the quotients $\Rees(\R) / \m\Rees(\R)$ for the remaining maximal ideals $\m \in \Max \k P$ are isomorphic to
the associated graded algebras with respect to filtrations induced by subgroups of $G$.
\end{rem}

\section{Proof of the transfer results}\label{proofof}
Throughout this section $(G, \leq)$ is a totally ordered, finitely generated abelian group and $\R$ is a non-negatively filtered $\k$-algebra with subalgebra
$\S \subseteq \R$. We write $S := \gr \S \subseteq R := \gr \R$. 

\subsection{Passing the Frobenius property through a quotient}
Before we begin we need one easy transfer result.
\begin{lem}\label{quotientslemma}
Suppose that $C \subseteq \S$ is a central subalgebra, $I \unlhd C$ is an ideal (resp. homogeneous ideal) and $\S \subseteq \R$ is a free-filtered (resp. free-graded) Frobenius extension.
Then $\S/I\S \subseteq \R/I\R$ is a free-filtered (resp. free-graded) Frobenius extension of the same rank and degree.
\end{lem}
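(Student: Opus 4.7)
The plan is to construct a Frobenius form for the quotient extension by reducing $\Phi : \R \to \S$ modulo $I$, and to verify Lemma~\ref{characterisation} via base change along the central ring map $C \to C/I$. The filtered and graded cases run in parallel, so I treat them simultaneously.

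First I would fix a free-filtered (resp. homogeneous free-graded) $\S$-basis $\{m_i\}_{i \in I'}$ of $\R$ in degrees $d_i$, and let $\Phi : \R \to \S$ be the (homogeneous, in the graded case) Frobenius form. Since $I \subseteq C$ is central in $\R$, one has $I\R = \bigoplus_i (I\S) m_i$, and therefore $\R/I\R = \bigoplus_i (\S/I\S) \bar m_i$ is a free-filtered (resp. free-graded) $\S/I\S$-module with the same multiset of degrees $\{d_i\}$, so the rank is preserved. As $\Phi$ is $\S$-linear we have $\Phi(I\R) \subseteq I\S$, and $\Phi$ descends to a left $\S/I\S$-linear (homogeneous, in the graded case) map $\bar\Phi : \R/I\R \to \S/I\S$ of filtered (resp. graded) degree at most $\deg(\Phi)$.

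By Lemma~\ref{characterisation}, the Frobenius hypothesis on $\S \subseteq \R$ is equivalent to the existence of an $\R$-$\S$-bimodule isomorphism $\Psi : \R \overset{\sim}{\longrightarrow} \Hom_\S(\R,\S)$ sending $1 \mapsto \Phi$. Tensoring this isomorphism with $C/I$ over $C$ yields $\R/I\R$ on the source and, using that $\R$ is finitely generated free over $\S$, the identification $\Hom_\S(\R,\S) \otimes_\S \S/I\S \cong \Hom_{\S/I\S}(\R/I\R, \S/I\S)$ on the target. The resulting $\R/I\R$-$\S/I\S$-bimodule isomorphism $\R/I\R \overset{\sim}{\longrightarrow} \Hom_{\S/I\S}(\R/I\R,\S/I\S)$ sends $1 \mapsto \bar\Phi$, so $\bar\Phi$ satisfies (F1) and (F2) and exhibits $\S/I\S \subseteq \R/I\R$ as a free-filtered (resp. free-graded) Frobenius extension of rank $|I'|$.

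It remains to match the degrees. The reduced map $\bar\Psi$ is a bijection of filtered degree at most $\deg(\Phi)$, and by Lemma~\ref{homfilt} its source and target have multisets of degrees $\{d_i\}$ and $\{d_i^{-1}\}$ respectively, the same as for $\R$ and $\Hom_\S(\R,\S)$. Applying Lemma~\ref{technical} to $\bar\Psi$ then forces $\deg(\bar\Phi) = \deg(\bar\Psi) = \deg(\Psi) = \deg(\Phi)$. The main technical point I expect to have to justify carefully is the base-change identification $\Hom_\S(\R,\S) \otimes_\S \S/I\S \cong \Hom_{\S/I\S}(\R/I\R, \S/I\S)$, but this follows directly from the freeness of $\R$ over $\S$ of finite rank.
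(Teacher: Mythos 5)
Your proposal is correct and follows essentially the same route as the paper: identify $\Hom_\S(\R,\S)\otimes_C C/I \cong \Hom_{\S_I}(\R_I,\S_I)$ using that $\R$ is finite free over $\S$, reduce $\Phi$ modulo $I$, and transport the bimodule isomorphism of Lemma~\ref{characterisation} across the base change. The one place where you diverge is in how the bijectivity of the reduced map $\Psi_I$ is obtained: you observe that applying $-\otimes_C C/I$ to the isomorphism $\Psi$ automatically yields an isomorphism (since tensoring preserves isomorphisms), whereas the paper argues surjectivity from a commutative diagram and then derives injectivity separately via the multiset-of-degrees comparison of Lemma~\ref{technical}. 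Your treatment of this point is marginally cleaner and does not rely on the filtered/graded structure for injectivity, though the paper's choice foreshadows the repeated use of the degree argument in Propositions~\ref{liftingupprop} and~\ref{passingupprop}. Both approaches still invoke Lemma~\ref{technical} at the end to pin down the degree of $\bar\Phi$, so the content is essentially identical.
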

\begin{proof}
It is easy to see that $\R / I \R$ is free over $\S / I \S$ and a basis is given by the image under $\R \twoheadrightarrow \R/I\R$ of a basis for $\R$ over $\S$.
It quickly follows that $\R/ I\R$ is a free-filtered $\S/I\S$-module.

For $C$-modules $\M$ we may write $\M_I := M/ IM = M \otimes_C C/I$. We claim that $\Hom_\S(\R, \S)_I \cong \Hom_{\S_I}(\R_I, \S_I)$ in a natural way.
First of all we have the map $\Hom_\S(\R, \S)_I \overset{\sim}{\longrightarrow} \Hom_\S(\R, \S_I)$ defined by sending $\phi \otimes c \mapsto (r \mapsto f(r) \otimes c)$,
which is clearly an isomorphism. Next we observe that, since elements of $\Hom_\S(\R,\S)$ are $C$-equivariant we have $\Hom_\S(\R, \S_I) = \Hom_\S(\R_I, \S_I)$.
Finally, this equals $\Hom_{\S_I}(\R_I, \S_I)$ since the $\S$-action on $\R_I$ and $\S_I$ factors through $\S_I$.

Now let $\Phi : \R \rightarrow \S$ be a Frobenius form and $\Psi : \R \rightarrow \Hom_\S(\R, \S)$ the corresponding isomorphism $r \mapsto (q \mapsto \Phi(rq))$
(see Lemma~\ref{characterisation} and the remarks that follow). We define a form $\Phi_I : \R_I \rightarrow \S_I$ by setting $\Phi_I(r + I\R) = \Phi(r) + IS$.
It is well defined since $\Psi$ is $C$-equivariant and $\Psi_I : \R_I \rightarrow \Hom_{\S_I}(\R_I, \S_I)$ is the corresponding map of $\R_I$-$\S_I$-bimodules.
It is readily seen that the following diagram is commutative:
\begin{center}
\begin{tikzpicture}[node distance=1.8cm, auto]
 \node (A) {$\R$};
 \node (E) [right of=A] { };
 \node (B) [right of=E] {$\Hom_\S(\R,\S)$};
 \node (G) [below of=A] { $ $};
 \node (C) [below of= G] {$\R_I$};
 \node (F) [right of=C] { };
 \node (D) [right of=F]{$\Hom_{\S_I}(\R_I, \S_I)$};
 \node (H) [below of=B] {$\Hom_\S(\R,\S)_I$};
  \draw[->] (A) to node {$\Psi$} (B);
 \draw[->>] (A) to node {$ $} (C);
 \draw[->] (C) to node {$\Psi_I$} (D);
 \draw[->>] (B) to node {$ $} (H);
 \draw[->] (H) to node {$\sim$} (D);
\end{tikzpicture}
\end{center}
Since $\Psi$ is surjective it follows that $\Psi_I \circ (-)_I : \R \rightarrow \Hom_{\S_I}(\R_I, \S_I)$ is surjective, and so too is $\Psi_I$.

Both $\R$ and $\Hom_\S(\R,\S)$ are free-filtered and we let $D_1, D_2$ denote their multisets of degrees of free-filtered generators.
By Lemma~\ref{homfilt} and Lemma~\ref{technical} we know that $D_1 d = D_2^{-1}$. By the observations of the first paragraph of the
current proof the $\S_I$-modules $\R_I$ and $\Hom_{\S_I}(\R_I, \S_I)$ are free-filtered and the degrees of the free-filtered generators
are also $D_1, D_2$ respectively. Now we may apply Lemma~\ref{technical} once more to see that $\Psi_I$ is injective. This completes the proof.
\end{proof}

\subsection{Lifting the Frobenius property through a filtration}\label{liftingsection}

Suppose that $S\subseteq R$ is a free-graded Frobenius extension of $\k$-algebras.
\begin{prop}\label{liftingupprop}
$\S \subseteq \R$ is a free-filtered Frobenius extension of the same rank and the same degree as $S \subseteq R$.
\end{prop}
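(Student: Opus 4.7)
The plan is to lift the homogeneous Frobenius form $\bar\Phi : R \to S$ of degree $d$ to a filtered map $\Phi : \R \to \S$ of filtered degree $d$, and then deduce the Frobenius axioms for $\Phi$ by passing to the associated graded, where they reduce to the given hypothesis on $S \subseteq R$.

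First I would apply Lemma~\ref{fgvsff} to conclude that $\R$ is a free-filtered $\S$-module of the same rank as $R$ over $S$; moreover, a free-filtered basis $m_1, \ldots, m_n \in \R$ with filtered degrees $d_1, \ldots, d_n \in G$ can be chosen so that the leading terms $\bar m_1, \ldots, \bar m_n$ form a free-graded $S$-basis of $R$. For each $i$, I would pick any $\Phi(m_i) \in \S_{d_i d}$ lifting $\bar\Phi(\bar m_i) \in S^{d_i d}$ and extend by left $\S$-linearity; a short calculation then shows that $\Phi$ has filtered degree $d$ and that $\gr \Phi = \bar\Phi$ under the natural identifications $\gr \R = R$, $\gr \S = S$.

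Next I would form the filtered left $\S$-module map $\Psi : \R \to \Hom_\S(\R, \S)$ defined by $\Psi(r)(q) := \Phi(rq)$; this has filtered degree $d$. Under the identification $\gr \Hom_\S(\R, \S) \cong \Hom_S(R, S)$ of Lemma~\ref{identifygraded}, the associated graded $\gr \Psi$ becomes precisely the map $R \to \Hom_S(R, S)$ induced by $\bar\Phi$, which is an isomorphism by the free-graded Frobenius hypothesis on $S \subseteq R$. Injectivity of $\gr \Psi$ immediately yields injectivity of $\Psi$ (any nonzero element of the kernel of $\Psi$ would have nonzero leading term in the kernel of $\gr \Psi$), and the fact that $\gr \Psi$ is a graded isomorphism of degree $d$ between free-graded modules forces the multisets of filtered degrees (namely $\{d_1, \ldots, d_n\}$ for $\R$ and $\{d_1^{-1}, \ldots, d_n^{-1}\}$ for $\Hom_\S(\R, \S)$, by Lemma~\ref{homfilt}) to satisfy $\{d_i d\} = \{d_i^{-1}\}$. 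Lemma~\ref{technical} then upgrades the injectivity of $\Psi$ to bijectivity.

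Finally, bijectivity of $\Psi$ delivers both Frobenius axioms. Surjectivity is (F2) directly; combined with the separating projections $\phi_i \in \Hom_\S(\R, \S)$ of Lemma~\ref{homfilt} it also rules out left ideals in $\Ker \Phi$, since any nonzero $r$ in such an ideal would satisfy $\phi_i(r) = 0$ for all $i$. Injectivity of $\Psi$ rules out right ideals in $\Ker \Phi$, since any $r$ in such an ideal $I$ satisfies $\Psi(r)(q) = \Phi(rq) = 0$ for every $q \in \R$, forcing $r = 0$. The filtered degree of the resulting extension is $d$ and the rank is $n$ by construction, matching the invariants of $S \subseteq R$. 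The main obstacle is the identification $\gr \Psi = \bar\Psi$ under Lemma~\ref{identifygraded}, which is the mechanism through which the graded Frobenius hypothesis transfers to the filtered setting; once that is secured, everything else is forced by the structure of free-filtered modules together with Lemma~\ref{technical}.
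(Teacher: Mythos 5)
Your proof is correct and follows essentially the same strategy as the paper's: lift $\bar\Phi$ to a filtered $\Phi$, form $\Psi$, compare the degree multisets of $\R$ and $\Hom_\S(\R,\S)$, and invoke Lemma~\ref{technical}. The one genuine difference lies in how you reach the two Frobenius axioms. The paper first proves (F1) directly by showing $\gr\Ker\Phi \subseteq \Ker\bar\Phi$ (so an ideal $I\subseteq\Ker\Phi$ gives a graded ideal $\gr I\subseteq\Ker\bar\Phi$, hence $\gr I = 0$ and $I=0$), and only then deduces injectivity of $\Psi$ from (F1) to feed into Lemma~\ref{technical}. You instead get injectivity of $\Psi$ directly from $\gr\Psi = \bar\Psi$ being injective, then apply Lemma~\ref{technical} to get bijectivity, and finally extract both (F1) and (F2) from bijectivity — with a nice observation that surjectivity plus the dual projections $\phi_i$ kills left ideals in $\Ker\Phi$, while injectivity of $\Psi$ kills right ideals. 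Both routes work; yours leans on the naturality of the isomorphism in Lemma~\ref{identifygraded} (to justify $\gr\Psi = \bar\Psi$) a bit more heavily, which you correctly flag as the load-bearing step, whereas the paper only needs $\gr\Phi = \bar\Phi$ and the kernel inclusion, which is slightly lighter. The end results and invariants agree.
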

\begin{proof}
Since $S \subseteq R$ is a free-graded Frobenius extension there exists a map of left $S$-modules $\oP : R \rightarrow S$ which satisfies
\begin{enumerate}
\item[(F1$^\prime$)] $\Ker \oP$ contains no non-trivial homogeneous left or right ideals;
\item[(F2)] $r \mapsto (q \mapsto \Phi(rq))$ is surjective $R \rightarrow \Hom_S(R,S)$;
\item[(GF1)] $\oP$ is homogeneous;
\item[(GF2)] $R$ is free-graded as an $S$-module.
\end{enumerate}

Property (GF2) and Lemma~\ref{fgvsff} together imply that $\R$ is free-filtered over $\S$ of the same rank, and that the multiset of degrees
of the free-filtered generators coincides with that of $R$ over $S$.
By Lemma~\ref{identifygraded} we may find some $\Phi \in \Hom_\S(\R,\S)$ with $\oP = \Phi + \sum_{g < \deg(\Phi)}\Hom_\S(\R,\S)_g$.
We claim that $\Phi$ satisfies properties (F1) and (F2) characterising free Frobenius extensions (see Lemma~\ref{characterisation}).

If $\K \subseteq \R$ is any subspace we identify the associated graded $K$ with a subspace of $R$ in the obvious way.
We claim that $\gr \Ker \Phi \subseteq \Ker \oP \subseteq R$.
The space $\gr \Ker \Phi$ is spanned by its homogeneous components. Let $\orr$ be one of these homogeneous elements and let $r \in \Ker(\Phi)$
be such that $\orr = r + \sum_{g<\deg(r)} \R_g$. Then $$\oP(\orr) = \Phi(r) + \sum_{g < \deg(\Phi) + \deg(r)} \S_g = 0.$$
For a general element of $\gr \Ker(\Phi)$ we apply the above reasoning to each of the homogeneous summands, which confirms the claim.
Now let us suppose that $I \unlhd \R$ is an ideal contained within $\Ker (\Phi)$. Consider the graded ideal $\gr I \unlhd R$.
By the previous observations we have $\gr I \subseteq \gr \Ker(\Phi) \subseteq \Ker(\oP)$. By assumption $\Ker(\oP)$ does not contain any non-trivial graded ideals, which forces
$\gr I = 0$ and $I = 0$, which verifies (F1).

We finish the proof with an argument similar to the last paragraph of the previous lemma. Both $R$ and $\Hom_S(R,S)$ are free-filtered
and we write $D_1, D_2$ for their multisets of degrees of free-graded generators. By Lemma~\ref{homfilt} and Lemma~\ref{technical}
we have $D_2 = D_1^{-1}$ and $D_1 d = D_1^{-1}$. By the proof of Lemma~\ref{fgvsff}, along with Lemma~\ref{homfilt} the degrees of
the free-filtered generators of $\R$ and $\Hom_\S(\R,\S)$ over $\S$ are $D_1$ and $D_2$ respectively. Now we may apply
Lemma~\ref{technical} to see that $\Psi_I$ is surjective, which completes the proof.
\end{proof}

\subsection{Passing the Frobenius property up to the Rees algebra}
Suppose that $\S \subseteq \R$ is a free-filtered extension of $\k$-algebras.
\begin{prop}\label{passingupprop} 
$\Rees(\S) \subseteq \Rees(\R)$ is a free-graded Frobenius extension of the same rank and the same degree as $\S \subseteq \R$.
\end{prop}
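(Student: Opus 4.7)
The plan is to lift the Frobenius form $\Phi : \R \rightarrow \S$ of filtered degree $d$ to a homogeneous form $\tPhi : \Rees(\R) \rightarrow \Rees(\S)$ and then verify axioms (GF1), (GF2), (F1$'$) and (F2) in turn. Pick a free-filtered basis $m_1, \ldots, m_n$ of $\R$ over $\S$ in degrees $d_1, \ldots, d_n$. One checks immediately that the elements $m_i \otimes d_i \in \Rees(\R)$ are homogeneous of degree $d_i$ and that they form a free-graded $\Rees(\S)$-basis of $\Rees(\R)$: spanning follows from $r = \sum_i s_i m_i$ with $s_i \in \S_{hd_i^{-1}}$ for $r \in \R_h$ (so that $r \otimes h = \sum_i (s_i \otimes h d_i^{-1})(m_i \otimes d_i)$), while freeness follows from the freeness of the $m_i$. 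This proves (GF2) and that the rank is preserved.

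Next, observe that for $r \in \R_g$ we have $\Phi(r) \in \S_{gd}$, so the formula $\tPhi(r \otimes g) := \Phi(r) \otimes gd$ (extended $\k$-linearly) gives a well defined $\k$-linear map of degree $d$. Left $\Rees(\S)$-linearity follows from the left $\S$-linearity of $\Phi$: for $s \in \S_h$, $r \in \R_g$,
\[
\tPhi((s\otimes h)(r \otimes g)) = \tPhi(sr \otimes hg) = \Phi(sr) \otimes hgd = s\Phi(r) \otimes hgd = (s \otimes h)\tPhi(r \otimes g).
\]
Thus $\tPhi$ satisfies (GF1) and has graded degree equal to the filtered degree of $\Phi$, which will ultimately give the equality of degrees claimed in the proposition.

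For (F1$'$), suppose $I \unlhd \Rees(\R)$ is a graded ideal contained in $\Ker(\tPhi)$. Since $I$ is homogeneous, $I = \bigoplus_g (J_g \otimes g)$ with $J_g \subseteq \R_g \cap \Ker(\Phi)$. I claim $J := \sum_g J_g$ is a two-sided ideal of $\R$: given $a \in J_g$ and $b \in \R_h$, the element $(b \otimes h)(a \otimes g) = ba \otimes hg$ lies in $I^{hg}$, hence $ba \in J_{hg}$, and the right-handed statement is analogous. Since $J \subseteq \Ker(\Phi)$, axiom (F1) for $\Phi$ forces $J = 0$ and so $I = 0$.

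Finally, (F2) will be deduced from Lemma~\ref{technical} (applied in its graded incarnation) rather than by a direct construction. The map $\Psi_{\Rees} : \Rees(\R) \rightarrow \Hom_{\Rees(\S)}(\Rees(\R), \Rees(\S))$ defined by $x \mapsto (y \mapsto \tPhi(xy))$ is homogeneous of degree $d$, and the argument of the second half of Section~\ref{thefrobform} shows that (F1$'$) is equivalent to the injectivity of $\Psi_{\Rees}$. Both source and target are free-graded $\Rees(\S)$-modules: the source has degree multiset $D_1 = \{d_i\}$ by the paragraph above, and the graded analogue of Lemma~\ref{homfilt} identifies the target as free-graded with degree multiset $D_2 = \{d_i^{-1}\}$. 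Now Lemma~\ref{technical} (graded version) tells us that an injective homogeneous morphism of free-graded modules of degree $d$ is bijective exactly when $D_1 d = D_2$; but this relation already holds in the filtered setting by Lemma~\ref{homfilt} and Lemma~\ref{technical} applied to the given free-filtered Frobenius extension $\S \subseteq \R$. Hence $\Psi_{\Rees}$ is bijective, confirming (F2) and simultaneously showing that the graded degree of the Rees extension equals $d$, the filtered degree of $\S \subseteq \R$. The principal subtlety is the careful identification of the degree multisets on both sides, which is what makes the transfer from the filtered to the graded Frobenius property go through without having to construct the inverse of $\Psi_{\Rees}$ by hand.
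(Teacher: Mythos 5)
Your proof is correct and follows essentially the same route as the paper: explicit free-graded basis $\{m_i\otimes d_i\}$ for (GF2), the same formula $\tPhi(r\otimes g)=\Phi(r)\otimes gd$ for (GF1), and the degree-multiset argument via Lemma~\ref{technical} and Lemma~\ref{homfilt} for (F2). Your treatment of (F1$'$) is a mild variant of the paper's: rather than verifying injectivity of $\tPsi$ on each graded component (which amounts to showing $\Ker\tPhi$ has no nonzero right ideals), you pull a graded one-sided ideal $I\subseteq\Ker\tPhi$ back to an ideal $J=\sum_g J_g\subseteq\Ker\Phi$ and invoke (F1) for $\Phi$; this is equally valid and in fact handles left and right ideals uniformly, whereas the paper leaves the left-ideal case to symmetry. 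One small imprecision: you assert that (F1$'$) is \emph{equivalent} to injectivity of $\Psi_{\Rees}$, but injectivity only encodes the absence of right ideals in $\Ker\tPhi$; the implication (F1$'$) $\Rightarrow$ injectivity, which is all your argument needs, is the one that is clear from \textsection\ref{thefrobform}.
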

\begin{proof}
Let $\M$ be a rank one free-filtered $\S$-module, ie. $\M \cong \S$ as $\S$-modules but the filtration has been shifted by some element $d \in G$.
Then we have $\Rees(\M) := \bigoplus_{g\in G} \M_g \otimes g \cong \Rees(\S)(1\otimes d^{-1})$ as $\Rees(\S)$-modules, hence $\Rees(\M)$ is
free-graded over $\Rees(\S)$. Now $\R$ is a finite direct sum of rank one free-filtered modules, hence $\Rees(\R)$ is free-graded over $\Rees(\S)$,
confirming axiom (GF2).

By Lemma~\ref{characterisation} there exists a form $\Phi : R \rightarrow S$ which contains no ideals in its kernel, such that $r \mapsto (q\mapsto \Phi(rq))$
is surjective $\R \rightarrow \Hom_\S(\R,\S)$.
We define
\begin{eqnarray*}
\Rees(\Phi) &:& \Rees(\R) \longrightarrow \Rees(\S);\\
& & \sum_{g\in G} r_g \otimes g \longmapsto \sum_{g\in G} \Phi(r_g) \otimes (g d)
\end{eqnarray*}
For brevity we write $\widetilde \Phi := \Rees(\Phi)$. We clearly have $\widetilde \Phi(\Rees(\R)) \subseteq \S \otimes_\k \k P$,
and since $\Phi(\R_g) \subseteq \S_{gd}$ the image lies in $\Rees(\S)$ as claimed. Furthermore $\tPhi$ is evidently homogeneous
of degree $d$, and so it remains to show that $\tPhi : \Rees(\R) \rightarrow \Rees(\S)$ satisfies properties (F1) and (F2) of Lemma~\ref{characterisation}.

Write $\tPsi : \Rees(\R) \rightarrow \Hom_{\Rees(\S)}(\Rees(\R), \Rees(\S))$ for the map $r \mapsto (q \mapsto \tPhi(rq))$.
In order to show that it is injective it will suffice to check that it is so on the graded components of $\Rees(\R)$.
Suppose that $a \in \R_g$ and $\tPsi(a \otimes g) = 0$. Then for all $q \in \Rees(\R)$ we have $\tPhi((a\otimes g) q) = 0$
In particular we may suppose $b \in \R_h$ and we have $\tPhi((a\otimes g)(b\otimes h)) = \Phi(ab) \otimes gh = 0$
which implies $\Phi(ab) = 0$ for all $b \in \R$. Since $\Phi$ contains no non-zero right ideals we have $a \R = 0$ and so $a = 0$.
This confirms that axiom (F1) of Lemma~\ref{characterisation} holds for $\tPhi : \Rees(\R) \rightarrow \Rees(\S)$.

To check axiom (F2) we use the same argument as per the last paragraph of either of the two previous proofs.

\end{proof}

\subsection{The Transfer Theorem}
Suppose that $\S \subseteq \R$ is a finite, filtered extension so that $S := \gr \S \subseteq R := \gr \R$ is a finite, graded extension.

\begin{thm}\label{transfertheorem}
$\S \subseteq \R$ is a free-filtered extension if and only if $S \subseteq R$ is a free-graded Frobenius extension.
In this case the degrees and ranks of the two extensions coincide.
\end{thm}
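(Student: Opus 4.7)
The plan is to deduce this theorem as a direct assembly of the two technical propositions already proved in this section together with Lemma~\ref{quotientslemma} and Lemma~\ref{reducingRees}; essentially no new ideas are required, only bookkeeping. I read the statement as ``$\S \subseteq \R$ is a free-filtered Frobenius extension if and only if $S \subseteq R$ is a free-graded Frobenius extension,'' which matches the Main Theorem advertised in the introduction.

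For the ``if'' direction I would invoke Proposition~\ref{liftingupprop} verbatim: it states precisely that a free-graded Frobenius extension $S \subseteq R$ lifts to a free-filtered Frobenius extension $\S \subseteq \R$ of the same rank and degree, so there is nothing further to do.

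For the ``only if'' direction my strategy is to route through the Rees algebra. Assuming $\S \subseteq \R$ is a free-filtered Frobenius extension, Proposition~\ref{passingupprop} produces a free-graded Frobenius extension $\Rees(\S) \subseteq \Rees(\R)$ of the same rank and degree. I would then cut down to $\gr \S \subseteq \gr \R$ by quotienting out the central homogeneous ideal $\m_0 \unlhd \k P$ from \textsection\ref{factRees}: since $\Rees(\S)$ sits naturally inside $\S \otimes_\k \k P$, the monoid algebra $\k P$ is a central subalgebra of $\Rees(\S)$, and $\m_0$ is homogeneous, so Lemma~\ref{quotientslemma} (in its free-graded form) applies with $C = \k P$ and $I = \m_0$. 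This yields a free-graded Frobenius extension
\[
\Rees(\S)/\m_0 \Rees(\S) \;\subseteq\; \Rees(\R)/\m_0 \Rees(\R)
\]
of the same rank and degree, which by Lemma~\ref{reducingRees}(0) is canonically identified with $S \subseteq R$.

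Since each of the three steps (Proposition~\ref{liftingupprop}, Proposition~\ref{passingupprop}, Lemma~\ref{quotientslemma}) preserves both rank and degree, the final rank/degree claim is automatic. The substantive work has already been discharged in the two propositions cited, so I do not foresee any genuine obstacle in this theorem; the only minor point meriting explicit verification is that $\k P$ really is central in $\Rees(\S)$ (which is immediate from the inclusion $\Rees(\S) \subseteq \S \otimes_\k \k P$) so that Lemma~\ref{quotientslemma} is legitimately applicable.
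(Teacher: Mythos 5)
Your proposal is correct and follows essentially the same route as the paper: Proposition~\ref{liftingupprop} for the ``if'' direction, and Proposition~\ref{passingupprop} followed by Lemma~\ref{quotientslemma} (with $C = \k P$, $I = \m_0$) and Lemma~\ref{reducingRees}(0) for the ``only if'' direction. Your explicit identification of the central subalgebra $C = \k P$ and your observation that the theorem statement omits the word ``Frobenius'' are both points the paper leaves implicit, but the argument is the same.
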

\begin{proof}
The `if' part follows from Proposition~\ref{liftingupprop}. Furthermore, Proposition~\ref{passingupprop} tells
us that $\Rees(\S) \subseteq \Rees(\R)$ is a free-graded Frobenius extension of the same rank and degree as $\S \subseteq \R$.
Note that $1\otimes \m_0$ is homogeneous ideal of $\k P$ generated by the generators of $P$ (excluding the identity), therefore $1\otimes \m_0$
generates a homogeneous ideal of $\Rees(\R)$ and of $\Rees(\S)$.
Applying Lemma~\ref{reducingRees} we see that $\Rees(\S)/ \m_0 \Rees(\S) \cong S$ and $\Rees(\R)/\m_0 \Rees(\R) \cong R$
whilst Lemma~\ref{quotientslemma} tells us that $S \subseteq R$ is a free-graded Frobenius extension of the same rank and degree as $\S \subseteq\R$,
which completes the proof.
\end{proof}

\begin{cor}\label{simplifydefn}
$S \subseteq R$ is a free-graded Frobenius extension if and only if it is a Frobenius extension and $R$ is a free-graded $S$-module.
\end{cor}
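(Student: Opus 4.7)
The forward direction is immediate: if $S \subseteq R$ is a free-graded Frobenius extension then by definition it is Frobenius and $R$ is free-graded over $S$ by (GF2). All the work is in the converse.

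The plan is to reduce the converse to the Transfer Theorem (Theorem~\ref{transfertheorem}) by viewing the graded structure as a filtered one. Suppose $S\subseteq R$ is Frobenius and $R$ is free-graded over $S$. Endow $R$ with the filtration $R_g := \bigoplus_{h \leq g} R^h$ induced by the grading, and similarly for $S$. Since gradings throughout the paper are assumed to be $\k$-finite, non-negative, discrete, exhaustive and proper, so is this induced filtration, which means it meets the standing hypotheses of Section~\ref{freefrobextensions}. Moreover, because $R$ is free-graded over $S$ with a homogeneous basis of degrees $(d_i)$, equation~(\ref{filteredgens}) is immediate from equation~(\ref{gradeddims}), so $R$ is free-filtered over $S$ with the same multiset of degrees. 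Thus $S \subseteq R$ is a free-filtered Frobenius extension in the sense of Section~\ref{freegrad}.

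Next I invoke Theorem~\ref{transfertheorem} to conclude that $\gr S \subseteq \gr R$ is a free-graded Frobenius extension. The crucial observation is that, when a graded algebra is filtered by its own grading in the manner above, the associated graded algebra is canonically isomorphic to the original graded algebra: $R^g \cong R_g/\sum_{h<g} R_h$ naturally. This identification is compatible with the inclusion $S \subseteq R$ and with the $S$-module structure on $R$, so the free-graded Frobenius extension produced by the Transfer Theorem is precisely the original extension $S \subseteq R$ (with its original grading). In particular, the associated graded of any filtered lift $\Phi$ of a Frobenius form is a homogeneous Frobenius form for $S \subseteq R$, satisfying both (GF1) and (GF2).

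The only potential obstacle is the verification that the filtration induced by a grading satisfies the standing hypotheses and that $\gr$ returns the original grading, but both of these are routine once the gradings are assumed to be $\k$-finite, discrete, exhaustive and non-negative as in Section~\ref{freefrobextensions}. There is no circularity: the Transfer Theorem is proved using Proposition~\ref{liftingupprop}, Proposition~\ref{passingupprop} and Lemma~\ref{quotientslemma}, none of which require this corollary.
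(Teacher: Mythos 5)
Your proof is correct and follows the same route as the paper: view the grading as a filtration, invoke Theorem~\ref{transfertheorem} to deduce that $\gr S \subseteq \gr R$ is a free-graded Frobenius extension, and then identify $\gr S \cong S$, $\gr R \cong R$ since the filtration was induced by the original grading. You spell out the routine verifications (that the induced filtration meets the standing hypotheses, that (\ref{gradeddims}) implies (\ref{filteredgens})) which the paper takes for granted, but the argument is the same.
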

\begin{proof}
The `only if' part if obvious so suppose that $R$ a is free-graded $S$-module with (not necessarily homogeneous) Frobenius form $\Phi: R \rightarrow S$. Viewing $S\subseteq R$ as a filtered extension we may apply the previous result to deduce that $\gr S \subseteq \gr R$ is a free-graded Frobenius extension. By assumption $\gr S \cong S$ and $\gr R \cong R$.
\end{proof}

\section{Applications: examples and counterexamples}\label{appsec}

\subsection{Quantum Schubert varieties}
Schubert varieties are the closures of the Schubert cells, which provide an affine paving of the flag variety of a reductive algebraic group
over $\C$ and they arise in a vast array of contexts in geometric representation theory; see \cite[Ch. 6]{CG97} for example. As a natural
step in the program of quantising classical geometric objects, their quantum analogues
have been defined and extensively studied (see \cite{DCP93} for example). The quantum coordinate rings on matrices occur as special examples.

Let $\Phi$ be a finite, indecomposable, crystallographic root system with associated Weyl group $W$, let $\g$ be the corresponding complex, simple Lie algebra and let $U_q(\g)$
denote the Drinfeld-Jimbo quantised enveloping algebra generated by $E_1,...,E_r, F_1,...,F_r, K_1^{\pm 1},..., K_r^{\pm 1}$ and with relations which may be read in \cite[Ch. 3]{Lu94}, for example.
Lusztig defined an action on $U_q(\g)$ of braid group $B$ associated to the abstract Weyl group of $\g$. This allows one to construct a (non-minimal)
system of generators for $U_q(\g)$ corresponding to the roots of $\g$ which serve as a system of PBW generators for $U_q(\g)$ (see \cite[I.4.6]{BG02}, for example):
\begin{lem}
Write $\Phi^+ = \{\alpha_1,...,\alpha_N\}$. There exist elements
$$\{E_\alpha, F_\alpha \in U_q(\g) \mid \alpha \in \Phi\}$$
such that the ordered monomials $$E_{\alpha_1}^{a_1} \cdots E_{\alpha_N}^{a_N} K_1^{b_1} \cdots K_r^{b_r}F_{\alpha_1}^{c_1}\cdots F_{\alpha_N}^{c_N}$$
(with $a_i \in \Z_{\geq 0}$ and $b_i \in \Z$) form a $\k$-basis for $U_q(\g)$.
\end{lem}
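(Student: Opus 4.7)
The plan is to follow Lusztig's classical construction via a reduced expression for the longest Weyl group element. First I would fix a reduced expression $w_0 = s_{i_1} s_{i_2} \cdots s_{i_N}$ for the longest element of $W$, which, as is standard, produces a convex enumeration of the positive roots by the formula $\beta_k := s_{i_1} \cdots s_{i_{k-1}}(\alpha_{i_k})$ for $k = 1, \ldots, N$. Applying Lusztig's braid automorphisms $T_{i} \in \operatorname{Aut}(U_q(\g))$ I would define the quantum root vectors
\[
E_{\beta_k} := T_{i_1} T_{i_2} \cdots T_{i_{k-1}}(E_{i_k}), \qquad F_{\beta_k} := T_{i_1} T_{i_2} \cdots T_{i_{k-1}}(F_{i_k}),
\]
and rename these so that the indexing matches the enumeration $\Phi^+ = \{\alpha_1, \ldots, \alpha_N\}$. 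By construction each $E_{\beta_k}$ lies in $U_q^+(\g)$ and each $F_{\beta_k}$ in $U_q^-(\g)$, and they specialise at $q = 1$ to (scalar multiples of) the classical Chevalley root vectors.

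The next step is to establish the Levendorskii--Soibelman straightening relations: for $j < k$ one has
\[
E_{\beta_k} E_{\beta_j} - q^{(\beta_j,\beta_k)} E_{\beta_j} E_{\beta_k} \in \operatorname{span}\bigl\{ E_{\beta_{j+1}}^{a_{j+1}} \cdots E_{\beta_{k-1}}^{a_{k-1}} \bigr\},
\]
and an analogous relation for the $F$'s. These follow from the braid relations and a case analysis in rank two, reducing via $T_i$-conjugation to root systems of type $A_2$, $B_2$ and $G_2$. Together with the well-known commutation rules between the $K_i^{\pm 1}$, the $E_\alpha$'s and the $F_\alpha$'s, these relations allow one to rewrite any monomial in the generators as a $\k$-linear combination of the claimed ordered monomials, so spanning is proved by an induction on a suitable (lexicographic) ordering of monomials.

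Linear independence is the main obstacle. The cleanest approach is to use the triangular decomposition $U_q(\g) \cong U_q^-(\g) \otimes U_q^0(\g) \otimes U_q^+(\g)$ as $\k$-vector spaces (a consequence of the defining relations and the existence of the Harish-Chandra-type pairing), and then argue separately in each tensor factor. The factor $U_q^0(\g)$ is a Laurent polynomial ring in $K_1, \ldots, K_r$, so independence of the $K$-monomials is immediate. For $U_q^+(\g)$ (and symmetrically $U_q^-(\g)$), I would grade by the root lattice $Q^+$ and compare Hilbert series: the ordered monomials in the $E_{\beta_k}$ have character $\prod_{\alpha \in \Phi^+} (1-e^\alpha)^{-1}$, which coincides with the known Hilbert series of $U_q^+(\g)$, so a spanning set of the correct size in each weight space is automatically a basis. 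Putting the three factors together, multiplication gives the required $\k$-basis, proving the lemma. The statement is then just the classical Lusztig PBW theorem as presented in \cite[I.6.7]{BG02} or \cite[Ch.~4]{Lu94}.
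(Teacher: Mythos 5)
The paper does not prove this lemma; it states it as a known fact and cites \cite[I.4.6]{BG02} and \cite{Lu94}, which is exactly where your sketch lands. Your outline (convex ordering from a reduced expression for $w_0$, Lusztig braid automorphisms to build the root vectors, Levendorskii--Soibelman straightening for spanning, triangular decomposition plus a character/Hilbert-series count for linear independence) is a faithful summary of the standard argument in those references, so the two are in agreement.
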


Pick a set of positive roots $\Phi^+ \subseteq \Phi$. For each Weyl group element $w \in W$ we can consider the space $\Phi[w] := \Phi^+ \cap w \Phi$
and the \emph{quantum Schubert variety} denoted $U_q[w]$ which is the subalgebra of $U_q(\g)$ generated by $\{E_{\alpha} \mid \alpha \in \Phi[w]\}$.
If we suppose that $q\in \C^\times$ satisfies $q^\ell = 1$ then it is well known that $E_{\alpha}^\ell$ is central in $U_q(\g)$. We write $Z_0 \subseteq U_q[w]$
for the subalgebra generated by $\{E_\alpha^\ell \mid \alpha \in \Phi[w]\}$. 

As explained in \cite[Section~10]{DCP93} there is a natural $G$-filtration on $U_q(\g)$ where $G = \Z^{2N + 1}$ with $G$ ordered lexicographically. We shall not describe this
filtration in any detail but we observe that each $U_q[w]$ inherits the subspace filtration. The following is a good illustration of the power of our transfer theorem.
\begin{thm}\label{schubert}
When $q^\ell = 1$ the quantum Schubert variety $U_q[w]$ is a free-filtered Frobenius extension of $Z_0$.
\end{thm}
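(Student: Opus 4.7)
The plan is to apply the Transfer Theorem (Theorem~\ref{transfertheorem}) together with the quantum affine space example (Proposition~\ref{freegradedqas}). Concretely, I will establish the Frobenius property at the level of the associated graded extension $\gr Z_0 \subseteq \gr U_q[w]$, where the structure is much more transparent, and then transfer it back.

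First I would recall from \cite{DCP93} that, with respect to the $G = \Z^{2N+1}$ filtration (ordered lexicographically), the associated graded algebra $\gr U_q[w]$ is a quantum affine space of dimension $|\Phi[w]|$: the images $\overline{E}_{\alpha}$ of the PBW generators $E_\alpha$ (for $\alpha \in \Phi[w]$) generate $\gr U_q[w]$ and they $q$-commute according to a skew-symmetric matrix whose entries are powers of $q$. Since $q^\ell = 1$, this matrix of commutation scalars satisfies the hypothesis of \textsection\ref{qas}. Because the defining relations for the PBW generators of $U_q[w]$ have leading terms equal to the corresponding quantum-affine-space relations (with lower-order terms that vanish in the associated graded), this identification requires essentially a statement about the shape of the Levendorskii-Soibelman straightening formula, and it is the main technical step on which all else rests.

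Next I would identify $\gr Z_0$ inside $\gr U_q[w]$. Since each $E_\alpha^\ell$ is central in $U_q(\g)$ and the generators $E_\alpha$ are homogeneous of some positive degree $d_\alpha \in G$, the element $E_\alpha^\ell$ is homogeneous of degree $d_\alpha^\ell$ in the Rees/filtration sense, and its image in the associated graded is precisely $\overline{E}_\alpha^\ell$. Thus $\gr Z_0$ is the graded subalgebra of the quantum affine space $\gr U_q[w]$ generated by $\{\overline{E}_\alpha^\ell \mid \alpha \in \Phi[w]\}$, which is exactly the $\ell$-centre as defined in \textsection\ref{qas}.

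At this point Proposition~\ref{freegradedqas} applies directly: $\gr Z_0 \subseteq \gr U_q[w]$ is a free-graded Frobenius extension, with explicit homogeneous Frobenius form given by projection onto the top restricted monomial, of rank $\ell^{|\Phi[w]|}$ and computable degree. Finally I would invoke the Transfer Theorem~\ref{transfertheorem} (noting that the filtration is non-negative and $\k$-finite because each $U_q[w]$ is generated by finitely many PBW generators in positive $G$-degree) to conclude that the original extension $Z_0 \subseteq U_q[w]$ is itself a free-filtered Frobenius extension of the same rank and degree.

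The hard part, as noted, is verifying that the associated graded is a quantum affine space in the required sense; once this is in hand, the transfer mechanism we have developed does all the remaining work, and we do not need to construct a Frobenius form on $U_q[w]$ by hand.
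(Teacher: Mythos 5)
Your proposal is correct and follows essentially the same route as the paper: identify $\gr U_q[w]$ as a quantum affine space via \cite[Proposition~10.1]{DCP93}, observe that $\gr Z_0$ is the $\ell$-centre of that quantum affine space, apply Proposition~\ref{freegradedqas}, and finish with Theorem~\ref{transfertheorem}. The only difference is that you spell out more carefully why $\gr Z_0$ is generated by the $\overline{E}_\alpha^\ell$ and why the filtration hypotheses are met, details the paper leaves implicit.
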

\begin{proof}
According to \cite[Proposition~10.1]{DCP93} the associated graded algebra $\gr U_q[w]$ is a quantum affine space. The subalgebra $Z_0$ inherits the filtration and it is clear that
$\gr Z_0$ is the subalgebra generated by the $\ell$th powers of the generators. Now apply Proposition~\ref{freegradedqas} and Theorem~\ref{transfertheorem}.
\end{proof}

\begin{rem}
By a very similar argument the quantum Borel $U_q(\mathfrak{b}) \subseteq U_q(\g)$ which is generated by $\{E_\alpha \mid \alpha\in \Phi^+\} \cup \{K_1^{\pm},...,K_r^{\pm}\}$
has an associated graded algebra which is a quantum torus. This allows you to recover Theorems~6.5 and 7.2(2) of \cite{BGS06} in a uniform manner. 
\end{rem}

\subsection{Modular finite $W$-algebras}

Finite $W$-algebras over $\C$ arise via a process of quantum Hamiltonian reduction, and they play a key role in the current developments in the
representation theory of complex semisimple Lie algebras (see \cite{Lo10} for example). The modular analogues of these $W$-algebras were first studied by Premet in \cite{Pr02}
(where the central quotients appeared) and more recently in \cite{Pr10} where the infinite dimensional versions were studied by reduction modulo $p$.
Recently Goodwin and the second author have presented a uniform approach to theory of modular finite $W$-algebras \cite{GT17}.
In the current section we show that the modular finite $W$-algebra is a Frobenius extension of the $p$-centre, with trivial Nakayama automorphism.

Let $G$ be a reductive algebraic group over an algebraically closed field of characteristic $p>0$ and set $\g = \Lie(G)$.
Recall that the enveloping algebra $U(\g)$ contains a large central subalgebra $Z_p(\g)$ called the $p$-centre.
We also assume the standard hypotheses, which can be read in \cite[\textsection 2.2]{GT17} for example, and we let $\kappa : \g \times \g \rightarrow \k$ denote
the non-degenerate $G$-invariant bilinear form on $\g$. Pick a nilpotent element $e\in \g$. According to \cite{Ja04} we can choose
an associated cocharacter $\lambda : \k^\times \rightarrow G$ with the property that $\lambda(t) e = t^2 e$ for all $t\in \k^\times$
and $\lambda(\k^\times)$ acts rationally on the centraliser $\g^e$ with positive eigenvalues. Write $\g(i) \subseteq \g$ for the $i$-eigenspace of $\lambda(\k^\times)$.

The bilinear form $\g(-1) \times \g(-1) \rightarrow \k$ given by $(x, y) \mapsto \kappa(e, [x,y])$ is non-degenerate and, according to \cite[\textsection 4.1]{GT17}, we can choose
a Lagrangian subspace $\l \subseteq \g(-1)$ in such a way that the nilpotent Lie algebra $\m := \l \oplus \bigoplus_{i < -1} \g(i)$ is algebraic,
ie. we may suppose $\m = \Lie(M)$ for some closed connected unipotent subgroup $M \subseteq G$. The linear function $x\mapsto \kappa(e, x)$
defines a character on $\m$ and we write $\m_e = \{x - \kappa(e, x) \mid x\in \m\}$. Now the modular finite $W$-algebra is defined to be
the quantum Hamiltonian reduction $U(\g,e) := (U(\g)/U(\g) \m_e)^{\operatorname{Ad}(M)}$. The $p$-centre of the $W$-algebra is defined to be
$Z_p(\g, e) := (Z_p(\g)/ Z_p(\g) \cap U(\g) \m_e)^{\operatorname{Ad}(M)}$.
\begin{thm}
The $W$-algebra $U(\g,e)$ is a free-filtered Frobenius extension of the $p$-centre with trivial Nakayama automorphism.
\end{thm}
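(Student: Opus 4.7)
The plan is to apply the Transfer Theorem (Theorem~\ref{transfertheorem}) to reduce the statement about $U(\g,e)$ to a statement about its associated graded algebra, where it becomes a special case of Proposition~\ref{freegradedqas}. Equip $U(\g,e)$ with the Kazhdan filtration inherited from $U(\g)$. By the modular PBW theorem for finite $W$-algebras established in \cite{GT17}, the associated graded $\gr U(\g,e)$ is a polynomial algebra $\k[\xi_1,\ldots,\xi_r]$, where $r = \dim \g^e$, on homogeneous lifts of a basis of $\g^e$. The $p$-centre $Z_p(\g,e)$ inherits the subspace filtration, and its associated graded is the subalgebra $\k[\xi_1^p, \ldots, \xi_r^p]$ generated by the $p$-th powers of these generators. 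The ordered PBW monomials $\Theta^{\underline{a}}$ with $0 \leq a_i < p$ form a free-filtered basis of $U(\g,e)$ over $Z_p(\g,e)$, so the extension is free-filtered in the sense of this paper.

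Next, Proposition~\ref{freegradedqas} with trivial commutation matrix $q_{i,j} = 1$, $\ell = p$ and $n = r$ shows that $\k[\xi_1^p, \ldots, \xi_r^p] \subseteq \k[\xi_1, \ldots, \xi_r]$ is a free-graded Frobenius extension of rank $p^r$, with homogeneous Frobenius form $\overline{\Phi}$ projecting onto the component $\xi_1^{p-1}\cdots\xi_r^{p-1}$. Hence Theorem~\ref{transfertheorem} immediately yields that $Z_p(\g,e) \subseteq U(\g,e)$ is a free-filtered Frobenius extension.

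The hard part will be the triviality of the Nakayama automorphism, since the Transfer Theorem as proved does not automatically descend Nakayama information from $\gr$ to the filtered algebra. My approach is to lift $\overline{\Phi}$ to an explicit Frobenius form $\Phi : U(\g,e) \to Z_p(\g,e)$ given by the $Z_p(\g,e)$-linear projection onto the top PBW monomial $\Theta^{(p-1,\ldots,p-1)}$, and then verify directly that $\Phi(xy) = \Phi(yx)$ for all $x,y\in U(\g,e)$, which forces $\nu$ to be the identity. For products $\Theta^{\underline{a}}\Theta^{\underline{b}}$ with $\underline{a}+\underline{b}$ not equal to $(p-1,\ldots,p-1)$, this reduces by filtration considerations to checking that lower-order corrections cannot land on the top monomial; for $\underline{a}+\underline{b}=(p-1,\ldots,p-1)$ both sides have the same leading scalar, and the essential task is to show that the projection of the commutator $[\Theta^{\underline{a}},\Theta^{\underline{b}}]$ onto $\Theta^{(p-1,\ldots,p-1)}$ vanishes, which should follow from the fact that commutators strictly drop the Kazhdan filtration degree and from the grading of $\gr U(\g,e)$ by the eigenspaces of the cocharacter $\lambda$. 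If this direct approach is insufficient, an alternative would be to exploit an anti-involution on $U(\g,e)$ fixing $Z_p(\g,e)$ pointwise, inherited from a suitable anti-involution on $U(\g)$ that swaps $e$ with an $\sl_2$-triple partner, to symmetrise $\Phi$.
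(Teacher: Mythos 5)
Your derivation of the Frobenius-extension statement is correct and follows exactly the paper's route: Kazhdan filtration, modular PBW theorem, identification of $\gr Z_p(\g,e)$ with the $p$-th power subalgebra, Proposition~\ref{freegradedqas} with $q_{i,j}=1$, and Theorem~\ref{transfertheorem}. You are also right that the Transfer Theorem as stated does not descend Nakayama information, so a separate argument is needed. The gap is in that separate argument. Your filtration reasoning shows $\Phi([\Theta^{\ua},\Theta^{\ub}])=0$ only when $\ua+\ub=(p-1,\ldots,p-1)$ componentwise, since only then is the Kazhdan degree of $\Theta^{\ua}\Theta^{\ub}$ equal to that of the top monomial. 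When some $a_i+b_i>p-1$ (the generic case), the product lies strictly above the filtered degree of $\Theta^{(p-1,\ldots,p-1)}$; the commutator drops degree, but may still sit above $\deg\Theta^{(p-1,\ldots,p-1)}$, and its coefficient on the top restricted monomial can then be a nonzero element of $Z_p(\g,e)$ of positive degree. Nothing in the PBW theorem rules this out. Indeed, in the special case $e=0$ (where $U(\g,e)=U(\g)$) it is known that the Nakayama automorphism need not be trivial for a general restricted Lie algebra, and the symmetry of the form hinges precisely on unimodularity $\tr\ad(x)=0$ (\cite[Proposition~1.2]{FP88}); your proposal contains no input comparable to unimodularity, so it cannot close the gap. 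The anti-involution fallback is also unworked: an anti-involution $\sigma$ with $\Phi\circ\sigma=\Phi$ yields $\Phi(xy)=\Phi(\sigma(y)\sigma(x))$, which is not the symmetry $\Phi(xy)=\Phi(yx)$ without further argument.

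The paper proves triviality of the Nakayama automorphism by a genuinely different route that explicitly injects unimodularity. It considers the reductions $U_\eta(\g)$ over $\eta\in\Max Z_p(\g)$, which are symmetric algebras by \cite[Proposition~1.2]{FP88} because reductive Lie algebras are unimodular; it then invokes the Morita isomorphism $U_\eta(\g)\cong\Mat_D(U_\eta(\g,e))$ from \cite[Remark~9.4]{GT17}, together with the fact that $A$ is symmetric iff $\Mat_D(A)$ is, to conclude that each reduced $W$-algebra $U_\eta(\g,e)$ is symmetric. Finally, using Lemma~\ref{quotientslemma} to identify $\Phi_\eta$ as a Frobenius form of $U_\eta(\g,e)$ and the fact that $Z_p(\g,e)$ is a reduced affine (hence Jacobson) commutative algebra, it deduces $\Phi(xy)-\Phi(yx)\in\bigcap_\eta\eta\,U(\g,e)=0$. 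If you want to salvage a direct PBW-level argument, you would need to prove an analogue of the unimodularity input at the level of the nonlinear Lie algebra structure on $\g^e$; the paper circumvents this entirely via the Morita theorem.
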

\begin{proof}
The PBW theorem for $U(\g,e)$ states that the associated graded algebra $\gr U(\g,e)$ with respect to the Kazhdan filtration is isomorphic to $\k[e + \v]$ where
$\v$ is a homogeneous complement to $T_e \Ad(G)e$ inside $\g$ \cite[Theorem~5.2]{GT17}. Furthermore, it follows from \cite[Lemmas~5.1 \& 8.2]{GT17} that $\gr Z_p(\g, e)$ identifies with
$\k[e + \v]^p$ as a Kazhdan graded subalgebra of $\k[e+\v]$. It follows immediately that $\gr U(\g,e)$ is a free-graded Frobenius extension of $\gr Z_p(\g,e)$
(this is actually a special case of Proposition~\ref{freegradedqas} where all commutation parameters are 1). By Theorem~\ref{transfertheorem}
we see that $U(\g,e)$ is free-filtered Frobenius extension of $Z_p(\g,e)$.

The enveloping algebra $U(\g)$ has central reductions $U_\eta(\g) := U(\g) / \eta U(\g)$ called reduced enveloping algebras parameterised by
elements $\eta\in \Max Z_p(\g) \leftrightarrow (\g^\ast)^{(1)}$. It is easy to see that the Lie algebras of reductive groups are unimodular, that is to
say that they satisfy $\tr(\ad(x)) = 0$ for all $x \in \g$, and so it follows from \cite[Proposition~1.2]{FP88} that $U_\eta(\g)$ is a symmetric algebra.
Now consider the reduced finite $W$-algebras $U_{\eta}(\g, \chi) := U(\g, \chi)/ \eta U(\g, e)$ where $\eta \in \Spec Z_p(\g, e)$. According to
\cite[Lemma~8.2]{GT17} we may identify the maximal spectrum with a subset of the Frobenius twist $(\chi + \m^\perp)^{(1)} \subseteq (\g^\ast)^{(1)}$
and by \cite[Remark~9.4]{GT17} we have an isomorphism $U_\eta(\g) \cong \Mat_{D}(U_\eta(\g, e))$ where $D = p^{\dim \m}$. It is not hard to
see that for $A$ a finite dimensional $\k$-algebra, $A$ is symmetric if and only if $\Mat_D(A)$ is so. To be precise, if $\mathbb{B} : \Mat_D(A) \times \Mat_D(A) \rightarrow \k$
is a non-degenerate symmetric associative bilinear form and $\iota$ is the idempotent corresponding to the $(1,1)$-entry of $\Mat_D(A)$ then the map
$x, y \mapsto \mathbb{B}(\iota x\iota , \iota y\iota )$ is a nondegenerate symmetric associative form on $A$, which we view as a subalgebra of $\Mat_D(A)$.
It transpires that $U_\eta(\g, e)$ is symmetric for all $\eta \in \Spec Z_p(\g, e)$.

Since $Z_p(\g, e) \subseteq U(\g, e)$ is a Frobenius extension we may denote by $\Phi$ the
Frobenius form. In the notation of Lemma~\ref{quotientslemma}, taking $C = Z_p(\g, e)$, we see that $\Phi_\eta$ is a Frobenius form for
$U_\eta(\g, e)$. By the previous paragraph it follows that $\Phi_\eta$ induces a symmetric form on $U_\eta(\g, e)$.

Now fix $x, y \in U(\g, e)$ and view $U(\g, e)$ as a free $Z_p(\g, e)$-module with basis $b_1,...,b_{r}$. If we write $$\Phi(xy) - \Phi(yx) = \sum_{i=1}^r z_i b_i$$
then it follows from the previous paragraph that $z_i \in \eta$ for all maximal ideals $\eta \in \Spec Z_p(\g, e)$. Since $Z_p(\g, e)$ is a reduced, commutative, affine algebra it is a Jacobson ring
and it follows that $\bigcap_{\eta \in \Spec Z_p(\g)} \eta = 0$ and so we conclude that $\Phi(xy) - \Phi(yx) = 0$ for all $x, y\in U(\g, e)$. We have deduced
that $\Phi$ induces a symmetric bilinear form from $U(\g, e)$ to $Z_p(\g, e)$ which completes the proof.
\end{proof}

\subsection{The Quantum Grassmanian $\mathcal{G}r(2,4)$}

In this final section we use the theory we have developed thus far to give an example of a very natural quantum algebra at an $\ell^\th$ root of unity, which is not a free
Frobenius extension of its $\ell$-centre. The proof relies on the following elementary lemma which follows directly from Lemma~\ref{technical} and Lemma~\ref{homfilt}.
\begin{lem}\label{symmetricdegrees}
Suppose that $\S \subseteq \R$ is a free-graded Frobenius extension of degree $d$ and $D$ is the multiset of degrees of the basis elements of $\R$ over $\S$. Then
$$D = D^{-1} d.$$
$\hfill \qed$
\end{lem}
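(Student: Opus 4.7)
The plan is to exploit the concrete description of $\Hom_\S(\R,\S)$ as a free-graded $\S$-module together with the elementary criterion of Lemma~\ref{technical} for when a morphism of free-graded modules is bijective.

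First, I would unpack what the Frobenius hypothesis says about $\R$ and its dual. By Lemma~\ref{homfilt} (its graded analogue), if $\{m_i\}$ is a free-graded $\S$-basis of $\R$ with the degrees recorded by the multiset $D$, then the dual basis $\{\phi_i\}$ determined by $\phi_i(m_j) = \delta_{i,j}$ realises $\Hom_\S(\R,\S)$ as a free-graded $\S$-module whose multiset of basis degrees is exactly $D^{-1}$.

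Next, I would invoke the characterisation of a Frobenius form from Lemma~\ref{characterisation}: if $\Phi$ is a homogeneous Frobenius form of degree $d$, then the assignment
\[
\Psi : \R \longrightarrow \Hom_\S(\R, \S), \qquad r \longmapsto \bigl(q \mapsto \Phi(rq)\bigr)
\]
is surjective, and by the right-handed version of (F1) it is also injective, so $\Psi$ is a bijection. A direct check shows that $\Psi$ is homogeneous of the same degree $d$ as $\Phi$: for $r$ of degree $g$ and $q$ of degree $h$, the image $\Psi(r)(q) = \Phi(rq)$ sits in degree $dgh$, so $\Psi(r)$ lives in degree $dg$.

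Finally I would feed this situation into Lemma~\ref{technical} with $\M_1 = \R$, $\M_2 = \Hom_\S(\R,\S)$, $D_1 = D$, $D_2 = D^{-1}$. Since $\Psi$ is a bijection of degree $d$ between these two free-graded modules, the lemma forces the relation between the multisets claimed in the statement, namely $D = D^{-1} d$. There is no real obstacle here: the substance of the proof is entirely shifted into Lemmas~\ref{homfilt} and~\ref{technical}; the only thing one must verify by hand is that $\Psi$ has the same graded degree as $\Phi$, which is a direct computation.
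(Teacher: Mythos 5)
Your approach is exactly the one the paper has in mind: the paper simply asserts that the lemma follows directly from Lemma~\ref{homfilt} and Lemma~\ref{technical}, and you have filled in precisely those two steps (dual basis identifies the degree multiset of $\Hom_\S(\R,\S)$ as $D^{-1}$; $\Psi$ is a homogeneous bijection of degree $d$; feed this into Lemma~\ref{technical}). One caveat: in the final step you claim Lemma~\ref{technical} ``forces the relation claimed'' without actually performing the substitution. If you do, with $D_1=D$, $D_2=D^{-1}$ and $\Psi$ of degree $d$, Lemma~\ref{technical} yields $Dd=D^{-1}$, i.e.\ $D=D^{-1}d^{-1}$, not $D=D^{-1}d$. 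This is the version consistent with Proposition~\ref{freegradedqas}: there the degree of the extension is computed to be $-\sum(\ell-1)d_i$, while the basis degrees $D$ are the non-negative values $\sum a_i d_i$ with $0\le a_i<\ell$, and one checks $D=D^{-1}d^{-1}$ but not $D=D^{-1}d$. The sign slip originates in the paper's own statement of the lemma and does not affect the substance (that $D$ is a reflected translate of itself, hence symmetric) nor the application to $\Gr(2,4)$, but a careful write-up should either flag it or replace $d$ by $d^{-1}$ in the conclusion rather than taking the stated formula on trust.
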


Let $n<m$ be two integers $q$ a nonzero element of the base field $\k$. We recall that the quantum $(n,m)$-Grassmanian $\mathcal{O}_q := \mathcal{O}_q[\Gr(n,m)]$ is the subalgebra of the quantum matrices $\mathcal{O}_q[\Mat(n\times m, \k)]$ generated by the maximal quantum minors. Let $q$ be a primitive $\ell^\th$ root of unity and consider the subalgebra $Z_0 \subseteq \mathcal{O}_q$ which is generated by the $\ell^\th$ powers of the maximal quantum minors.
Since quantum Grassmanians do not appear to have been studied in the root of unity case we include a brief sketch of the fact that the $\ell$th powers are truly central in $\mathcal{O}_q$.
\begin{lem}
$Z_0$ is a central subalgebra of $\mathcal{O}_q$, which we shall call \emph{the $\ell$-centre}.
\end{lem}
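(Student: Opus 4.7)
The plan is to first reduce the claim to a verification on algebra generators: since both $Z_0$ and $\mathcal{O}_q$ are generated as $\k$-algebras by the explicit finite families $\{[I]^\ell\}$ and $\{[I]\}$ respectively (with $I$ ranging over the $n$-subsets of $\{1,\ldots,m\}$), it is enough to check that $[I]^\ell[J] = [J][I]^\ell$ for every such pair of maximal minors $I, J$.

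I would then exploit the standard embedding of the quantum Grassmannian as the subalgebra of the quantum matrix algebra $\mathcal{O}_q[\Mat(n\times m, \k)]$ generated by the $n\times n$ top-justified quantum minors. Centrality in the ambient algebra restricts to centrality in any subalgebra containing the element, so the lemma reduces to the following assertion: at a primitive $\ell^\th$ root of unity, the $\ell^\th$ power of every quantum minor of $\mathcal{O}_q[\Mat(n\times m, \k)]$ is central. This is a well-known foundational result for quantum matrices and may be extracted from \cite[III.4]{BG02} together with the references cited there; its proof proceeds by a direct induction on $\ell$ using the four standard quasi-commutation relations between matrix entries, combined with a geometric-sum identity that vanishes at $q^\ell = 1$.

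The main obstacle, and essentially the only place where work is required, is the ambient centrality statement itself. The delicate ingredient is the diagonal commutation $x_{ij}x_{kl} - x_{kl}x_{ij} = (q - q^{-1})\, x_{il} x_{kj}$ for $i<k$, $j<l$, which generates non-trivial Pl\"ucker-like corrections at each step of the induction. Iterating this commutation $\ell$ times produces a scalar factor of the form $(q-q^{-1}) \sum_{k=0}^{\ell-1} q^{2k}$, which vanishes at $q^\ell = 1$ provided $q^2 \neq 1$. The same analysis applies to arbitrary quantum minors since their pairwise commutation relations have the same structural form; once this bookkeeping is pushed through, centrality in $\mathcal{O}_q[\Mat(n\times m, \k)]$, and hence in $\mathcal{O}_q$, follows by restriction.
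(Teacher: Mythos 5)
Your overall strategy matches the paper's: prove that $[I]^\ell$ is central in the ambient quantum matrix algebra $\mathcal{O}_q[\Mat(n\times m,\k)]$, then restrict to the Grassmannian. Indeed the paper proves exactly the stronger statement $[[I]^\ell, X_{i,j}] = 0$ for every matrix generator $X_{i,j}$.

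However, your sketch of the ambient step has a genuine gap. You propose to iterate the Manin ``diagonal'' commutation $x_{ij}x_{kl} - x_{kl}x_{ij} = (q-q^{-1})\,x_{il}x_{kj}$ and then pass to arbitrary quantum minors by ``the same analysis'' and some bookkeeping. That step is precisely where the argument lives. The induction does not run on entry-entry relations; it runs on the \emph{minor-entry} commutation, which in the paper comes from Goodearl--Lenagan: for $j\notin I$ one has $[I]X_{i,j} = q X_{i,j}[I] + (q-q^{-1})Z$ for a specific correction $Z$ that itself quantum-commutes appropriately. Only with that specific form of the correction term, and its known commutation with $[I]$, does the iterated relation close into $[I]^t X_{i,j} = q^t X_{i,j}[I]^t + (q^t - q^{-t})\cdot(\text{correction})$, whose coefficient vanishes at $t=\ell$. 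Asserting that the same structure appears for minors is where you must actually cite or prove something, and your suggested source \cite[III.4]{BG02} (which concerns PI Hopf triples and Frobenius extensions) does not supply it; the paper uses \cite[Lemmas 5.2--5.3]{GL02}. Finally, the ``provided $q^2\neq 1$'' caveat in your geometric-sum computation is a red herring: the vanishing at $t = \ell$ is simply $q^{\ell}-q^{-\ell}=0$, with no extra hypothesis needed.
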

\begin{proof}
For $I \subseteq \{1,...,m\}$ we write $[I]$ for the maximal quantum minor in $\mathcal{O}_q[\Mat(n\times m, \k)]$ with columns indexed by $I$ (and rows $\{1, \dots ,n\}$) and by $X_{i,j}$ the canonical generators of $\mathcal{O}_q[\Mat(n\times m, \k)]$. We prove the stronger result, that $[[I]^\ell, X_{i,j}] = 0$. If $j \in I$ then \cite[Lemma~5.2(a)]{GL02} implies that
$[[I], X_{i,j}] = 0$. If $j \notin I$ then \cite[Lemma~5.2(b)]{GL02} imply that $[I] X_{i,j} = qX_{i,j} [I] + (q - q^{-1}) Z$ where $Z := \sum_{k > j, k \in I} q^{|I \cap [j,k]|} X_{i,k}[I \setminus \{k\} \cup \{j\}]$.
Since $Z$ and $X_{i,j}$ quantum commute (by  \cite[Lemmas~5.2(a) and 5.3(b)]{GL02}) an easy induction gives $[I]^{t} X_{i,j} = q^t X_{i,j} [I]^t + (q^t - q^{-t}) Z$ which leads to $[[I]^\ell, X_{i,j}] = 0$
upon setting $t = \ell$.

\end{proof}

The relations between the generators of $\mathcal{O}_q$ are called the \emph{quantum Pl\"{u}cker relations} and are quite complicated in general. Rigal and Zadunaisky have shown
that $\mathcal{O}_q$ is a quantum algebra with straightening law \cite{RZ12}. One of the many nice consequences of this fact is the existence of a filtration such that the associated graded $\gr \mathcal{O}_q$
has a simple presentation. Since we are in survey mode we do not want to describe this filtration in detail, however we shall describe the associated graded
algebra when $(n,m) = (2,4)$.
\begin{lem} \cite[Theorems~4.9 \& 5.1.6]{RZ12}
There is a filtration of $\mathcal{O}_q$ such that $\gr \mathcal{O}_q$ has generators $\{x_1 = [12], x_2 = [13], x_3 = [23], x_4 = [14], x_5 = [24], x_6 = [34]\}$
with relations
\begin{eqnarray}
& &\label{comrel} x_i x_j = q^{s(i,j)} x_j x_i \text{ for all } 1\leq i, j \leq 6 \\ 
& &\label{strel} x_3 x_4 = q^{t(i,j)} x_2 x_4.
\end{eqnarray}
As a consequence, $\mathcal{O}_q$ has a basis given by the standard monomials
\begin{eqnarray}\label{standardbasis}
\{x_1^{k_1}x_2^{k_2} x_i^{k_i} x_5^{k_5} x_6^{k_6} \mid i= 3 \text{ or } 4 \text{ and } k_1,...,k_6\geq 0\}
\end{eqnarray}
$\hfill \qed$
\end{lem}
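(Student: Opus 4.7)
The plan is to recognise $\mathcal{O}_{q}[\Gr(2,4)]$ as an example of a quantum algebra with straightening law in the sense of \cite{RZ12} and then to specialise their construction to this concrete small case. The six maximal minors of a quantum $2\times 4$ matrix, equipped with the coordinate-wise partial order on two-element subsets of $\{1,2,3,4\}$, contain a unique incomparable pair $\{[14],[23]\}$, and correspondingly there is a unique non-trivial quantum Pl\"ucker relation
\[
[12][34] \;=\; \alpha\,[13][24] + \beta\,[14][23]
\]
for explicit $q$-scalars $\alpha,\beta \in \k^{\times}$, which one obtains either by a quantum Laplace expansion inside $\mathcal{O}_{q}[\Mat(2\times 4,\k)]$ or directly from \cite[Lemma~5.2]{GL02}.

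To build the filtration I would assign a weight $w(I) \in \Z^{6}$ to each Pl\"ucker coordinate so that, in the relation above, the two monomials $[13][24]$ and $[14][23]$ have equal, maximal total weight while $[12][34]$ has strictly smaller weight; setting $(\mathcal{O}_{q})_{g}$ equal to the span of products of minors of total weight at most $g$ (with $\Z^{6}$ lexicographically ordered) then produces a filtration which is discrete, $\k$-finite, non-negative and exhaustive, as required by the conventions of Sections~\ref{freefrobextensions} and \ref{frobsec}. In $\gr \mathcal{O}_{q}$ the lowest-weight summand of the Pl\"ucker relation is killed, yielding the monomial identity $x_{3}x_{4} = \gamma\,x_{2}x_{5}$ that is recorded in (\ref{strel}), while the quasi-commutation scalars $q^{s(i,j)}$ of (\ref{comrel}) are inherited unchanged from the commutation relations among quantum minors of \cite[Lemma~5.2]{GL02}.

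From this presentation the standard monomial basis (\ref{standardbasis}) would follow by a diamond-lemma argument for the rewriting system ``sort using (\ref{comrel}) and replace each occurrence of $x_{3}x_{4}$ by $\gamma\,x_{2}x_{5}$''. Termination is immediate because each step strictly decreases a suitable lexicographic order on exponent vectors, and the only ambiguities to resolve are the overlaps of the straightening rule with commutation past $x_{3}$ or $x_{4}$, which is a finite, routine check. Linear independence of the resulting standard monomials then follows by comparing graded Hilbert series against the obvious quotient of the quantum affine space on $x_{1},\ldots,x_{6}$, and Lemma~\ref{fgvsff} lifts the graded basis of $\gr \mathcal{O}_{q}$ to a free-filtered basis of $\mathcal{O}_{q}$ itself.

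The main obstacle is not conceptual but computational: to make everything precise one must tabulate the fifteen commutation scalars $s(i,j)$, compute $\alpha$ and $\beta$ (and hence $\gamma$ and $t$) explicitly, and verify that the chosen weighting really does single out $\{[13][24],[14][23]\}$ as the leading pair in the Pl\"ucker relation, with $[12][34]$ strictly subordinate. None of these steps is deep, but the bookkeeping of $q$-scalars is fiddly; once completed, the lemma is a formal consequence of the framework of \cite{RZ12}.
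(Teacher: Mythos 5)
The paper gives no proof of this lemma---it is quoted directly from \cite[Theorems~4.9 \& 5.1.6]{RZ12}, as the $\qed$ without a proof environment signals---so your sketch is really an unpacking of what that citation means in the $\Gr(2,4)$ case, and as such it is essentially the same approach rather than a different one. The structure is right: recognise $\mathcal{O}_q$ as a quantum algebra with straightening law, choose a weight so that $[13][24]$ and $[14][23]$ share the leading weight in the unique Pl\"ucker relation while $[12][34]$ falls strictly below, read off the binomial $x_3 x_4 \propto x_2 x_5$ in the associated graded (you have, correctly, read $x_2 x_5$ through the paper's typo $x_2 x_4$ and ignored the spurious indices in $q^{t(i,j)}$), then obtain the standard monomial basis by a diamond-lemma or Hilbert-series comparison. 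The one point you should not merely assert is the direction of degeneration: that it is $[12][34]$, and not $[13][24]$, that falls into lower filtration degree is exactly what produces $x_3 x_4 \propto x_2 x_5$ rather than $x_3 x_4 \propto x_1 x_6$, and in RZ12 this is a consequence of the QASL axioms (the min and max of the Plücker poset, $[12]$ and $[34]$, play the degenerate role), not a free choice of weight; the paper's later use of the identity $\deg x_3 + \deg x_4 = \deg x_2 + \deg x_5$ depends on exactly this. Beyond that, the $q$-scalar bookkeeping you explicitly defer is genuinely the content of the RZ12 theorems being cited.
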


The following lemma is a short combinatorial exercise. The proof is omitted for the sake of brevity.
\begin{lem}\label{gradedbase}
$\gr \mathcal{O}_q$ is a free module over $\gr Z_0$ with basis
\begin{eqnarray*}
\left\{x_1^{k_1}x_2^{k_2} x_i^{k_i} x_5^{k_5} x_6^{k_6} \mid \begin{array}{c} i= 3,4 ; 0\leq k_1,...,k_6 < \ell \\ \text{ either } k_2 + k_i < \ell \text{ or } k_i + k_5 < \ell\end{array}  \right\}
\end{eqnarray*}
\end{lem}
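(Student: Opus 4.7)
\emph{Proof plan.} My strategy is to compare the multiplication map $\gr Z_0 \otimes_{\k} \k B \to \gr\mathcal{O}_q$ (where $B$ denotes the proposed basis) against the standard monomial basis of $\gr\mathcal{O}_q$ and show it is an isomorphism. First I would describe $\gr Z_0$ precisely. Raising each relation in (\ref{comrel}) to the $\ell$th power kills the quantum scalars, since $q^\ell = 1$, so the elements $y_j := x_j^\ell$ pairwise commute. An iterated use of (\ref{strel}) shows that each $y_j$ is in fact central in $\gr\mathcal{O}_q$, while raising (\ref{strel}) itself to the $\ell$th power yields a single Plücker-type identity $y_3 y_4 = \lambda y_2 y_5$ in $\gr Z_0$ with some $\lambda \in \k^\times$. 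Thus $\gr Z_0$ has a $\k$-basis consisting of monomials $y_1^{b_1} y_2^{b_2} y_3^{b_3} y_4^{b_4} y_5^{b_5} y_6^{b_6}$ subject to $b_3 b_4 = 0$.

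For spanning, take any standard monomial $x_1^{a_1} x_2^{a_2} x_i^{a_i} x_5^{a_5} x_6^{a_6}$ from (\ref{standardbasis}) and write each $a_j = \ell c_j + k_j$ with $0 \leq k_j < \ell$. Centrality of the $y_j$ (plus $q^\ell = 1$) allows one to factor out the $\ell$th powers up to a nonzero scalar, giving
\begin{equation*}
x_1^{a_1}\cdots x_6^{a_6} = (q\text{-scalar})\, y_1^{c_1}\cdots y_6^{c_6}\cdot x_1^{k_1} x_2^{k_2} x_i^{k_i} x_5^{k_5} x_6^{k_6}.
\end{equation*}
If the residue satisfies $k_2 + k_i < \ell$ or $k_i + k_5 < \ell$ it already lies in $B$. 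Otherwise (say $i = 3$ with $k_2, k_5 \geq p := \ell - k_3$), raise (\ref{strel}) to the $p$th power and collect quantum factors to obtain an identity $x_3^p x_4^p = \eta_p x_2^p x_5^p$ in $\gr\mathcal{O}_q$ with $\eta_p \in \k^\times$. Substituting, the residue rewrites as $\eta_p^{-1}(q\text{-scalar})\, y_3 \cdot x_1^{k_1} x_2^{k_2 - p} x_4^{p} x_5^{k_5 - p} x_6^{k_6}$, and the new residue lies in $B$ because it has $i = 4$ and $(k_2 - p) + p = k_2 < \ell$. The case $i = 4$ is handled symmetrically.

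For linear independence I would compare Hilbert series in the standard grading $\deg(x_j) = 1$. We have $H_{\gr\mathcal{O}_q}(t) = (1-t^2)/(1-t)^6$ (six degree-one generators modulo one quadratic relation) and $H_{\gr Z_0}(t) = (1-t^{2\ell})/(1-t^\ell)^6$. A direct enumeration of $B$ by total degree, with inclusion-exclusion to handle the overlap between the $i=3$ and $i=4$ cases when $k_3 = k_4 = 0$, yields the identity $H_{\gr Z_0}(t)\cdot H_B(t) = H_{\gr\mathcal{O}_q}(t)$. Since the multiplication map is graded, surjective (by the previous step), and each graded component of $\gr\mathcal{O}_q$ is finite-dimensional, this matching of Hilbert series forces bijectivity in every degree, hence freeness. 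The main obstacle is keeping a clean record of the quantum scalars through the rewriting (especially verifying $\lambda, \eta_p \neq 0$), together with the combinatorial enumeration of $B$; both are routine but combinatorially fiddly, which is why the paper defers to a direct check.
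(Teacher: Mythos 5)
Your strategy---verify spanning by an explicit rewriting algorithm, then close the argument by matching Hilbert series---is a sensible template, and the spanning part is essentially fine. The problem is the step you deferred as ``routine but combinatorially fiddly'': the identity $H_{\gr Z_0}(t)\cdot H_B(t) = H_{\gr\mathcal{O}_q}(t)$ does \emph{not} hold, and since your linear-independence argument rests entirely on it, the proof collapses there.

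Take $\ell = 2$ and grade by $\deg x_j = 1$. Then $H_{\gr\mathcal{O}_q}(t) = (1+t)/(1-t)^5$ and, with $\gr Z_0 \cong \k[y_1,\dots,y_6]/(y_3y_4 - \lambda y_2y_5)$ as you describe, $H_{\gr Z_0}(t) = (1+t^2)/(1-t^2)^5$. The ratio $(1+t)^6/(1+t^2)$ is not a polynomial (polynomial division leaves remainder $-8t$), so no finite graded set $B$ whatsoever can satisfy $H_{\gr Z_0}\cdot H_B = H_{\gr\mathcal{O}_q}$. Concretely, in degree $4$ one has $\dim(\gr\mathcal{O}_q)_4 = 105$, whereas $\dim(\gr Z_0)_4\cdot|B_0| + \dim(\gr Z_0)_2\cdot|B_2| + \dim(\gr Z_0)_0\cdot|B_4| = 20\cdot 1 + 6\cdot 14 + 1\cdot 5 = 109$; the discrepancy $109 > 105$ already rules out injectivity of the multiplication map.

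The failure is visible even more directly: with $\ell = 2$, both $x_2x_4$ and $x_3x_5$ belong to the set $B$ you wrote down (for $x_2x_4$ take $i=4$ and note $k_4+k_5 = 1 < 2$; for $x_3x_5$ take $i=3$ and note $k_2+k_3 = 1 < 2$), yet
\[
y_3\cdot(x_2x_4) = x_3^2 x_2 x_4 = (\text{scalar})\,x_2^{2}x_3 x_5 = (\text{scalar})\,y_2\cdot(x_3x_5),
\]
using $x_3x_4 = q^t x_2x_5$. Since $y_3, y_2$ are distinct standard monomials of $\gr Z_0$ and $x_2x_4, x_3x_5$ are distinct elements of $B$, this is an explicit nontrivial relation over $\gr Z_0$, so $B$ is not a free basis. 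Your rewriting argument shows $B$ \emph{generates}, but does not establish that the rewriting is the unique expression of a standard monomial in the form $z\cdot b$, and indeed it is not. You should not defer the Hilbert-series check: it is the crux of the claim, and carrying it out reveals the obstruction rather than confirming freeness. (This also suggests that the statement being proved needs re-examination---either the set $B$, the condition ``either $k_2+k_i<\ell$ or $k_i+k_5<\ell$'', or the description of $\gr Z_0$ has to change before a freeness proof along these lines can go through.)
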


\begin{thm}
$\mathcal{O}_q = \mathcal{O}_q[\Gr(2,4)]$ is not a Frobenius extension of its $\ell$-centre $Z_0$.
\end{thm}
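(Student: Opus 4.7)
The plan is to suppose for contradiction that $Z_0 \subseteq \mathcal{O}_q$ is Frobenius, transfer this hypothesis to a symmetry constraint on degrees in the associated graded, and then refute the symmetry by an explicit count of the basis from Lemma~\ref{gradedbase}.

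First I would transfer to the associated graded. By Lemma~\ref{gradedbase}, $\gr \mathcal{O}_q$ is free-graded over $\gr Z_0$, so Lemma~\ref{fgvsff} shows that $\mathcal{O}_q$ is free-filtered over $Z_0$. Under the Frobenius hypothesis, $Z_0 \subseteq \mathcal{O}_q$ would then be a free-filtered Frobenius extension, and Theorem~\ref{transfertheorem} would give that $\gr Z_0 \subseteq \gr \mathcal{O}_q$ is in particular a Frobenius extension of ungraded algebras.

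Next I would pass to a convenient grading: $\gr \mathcal{O}_q$ carries a $\Z$-grading by total degree, with each $x_i$ placed in degree $1$. The relations (\ref{comrel}) and (\ref{strel}) are homogeneous of degree $2$, so this is a valid non-negative grading; moreover $\gr Z_0$ is a graded subalgebra and the basis of Lemma~\ref{gradedbase} is $\Z$-homogeneous. Thus $\gr \mathcal{O}_q$ is a free-$\Z$-graded $\gr Z_0$-module, and Corollary~\ref{simplifydefn} promotes the ungraded Frobenius property to a free-$\Z$-graded Frobenius extension. Applying Lemma~\ref{symmetricdegrees}, the multiset $D$ of total degrees of basis monomials must satisfy $D = -D + d$ for some $d \in \Z$. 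Since the identity is the unique basis element of minimum degree $0$, the symmetry forces $d = \max D$ and forces the multiplicity of $\max D$ in $D$ to equal $1$.

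The remaining and principal step is a concrete combinatorial calculation; this is where I expect the main (though still elementary) work to lie. The constraint ``$k_2 + k_i < \ell$ or $k_i + k_5 < \ell$'' together with $k_j \leq \ell - 1$ forces $k_2 + k_i + k_5 \leq 2(\ell - 1)$, so $\max D = 4(\ell - 1)$ and this bound is attained. Enumerating solutions of $k_2 + k_i + k_5 = 2(\ell - 1)$ compatible with the constraint for each $i \in \{3, 4\}$ (with $k_1 = k_6 = \ell - 1$ forced) and subtracting the unique overlap $x_1^{\ell-1} x_2^{\ell-1} x_5^{\ell-1} x_6^{\ell-1}$ will yield multiplicity $4\ell - 3$ at degree $4(\ell-1)$. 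Since $4\ell - 3 > 1$ for every $\ell \geq 2$, this contradicts the symmetry of $D$ demanded by Lemma~\ref{symmetricdegrees}, completing the argument.
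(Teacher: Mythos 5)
Your proof is correct and rests on the same architecture as the paper's---transfer the hypothetical Frobenius property to $\gr\mathcal{O}_q$ via Theorem~\ref{transfertheorem}, use Corollary~\ref{simplifydefn} to switch to any convenient grading for which the basis of Lemma~\ref{gradedbase} is homogeneous, then violate the symmetry of Lemma~\ref{symmetricdegrees}---but you apply the lemma at a different grading and compare a different pair of degrees. The paper weights the generators $(\deg x_1,\dots,\deg x_6)=(2,1,2,1,2,2)$ and contrasts the multiplicity at degree $1$ (which is $2$) against that at degree $8(\ell-1)-1$; you instead use the total-degree grading (every $x_i$ in degree $1$) and contrast the unique basis element of minimum degree $0$ against the top degree $\max D=4(\ell-1)$. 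Your count of $4\ell-3$ at the top is right: with $k_1=k_6=\ell-1$ forced, the admissible solutions of $k_2+k_i+k_5=2(\ell-1)$ for each $i\in\{3,4\}$ are $\{k_5=\ell-1,\ k_2+k_i=\ell-1\}\cup\{k_2=\ell-1,\ k_i+k_5=\ell-1\}$, of size $2\ell-1$, and the only monomial common to $i=3$ and $i=4$ is $x_1^{\ell-1}x_2^{\ell-1}x_5^{\ell-1}x_6^{\ell-1}$, giving $2(2\ell-1)-1=4\ell-3>1$. One point worth noting: the paper's specific assertion that there are \emph{no} basis elements of degree $8(\ell-1)-1$ is in fact not accurate (already for $\ell=2$ the monomials $x_1x_2x_3x_6$, $x_1x_2x_5x_6$, $x_1x_4x_5x_6$ all have that degree), although the contradiction survives because that multiplicity is still not $2$. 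Your version, by pinning one side of the comparison at the minimum degree where the multiplicity is trivially $1$, sidesteps the more delicate enumeration near $\max D-1$ and is the cleaner route.
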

\begin{proof}
We suppose for a contradiction that $\mathcal{O}_q$ is a Frobenius extension of $Z_0$. The filtration defined in \cite{RZ12} assigns a filtered degree
to each generator $x_1,...,x_6$ in such a way that $\deg(x_3) + \deg(x_4) = \deg(x_2) + \deg(x_5)$. It is easy to see from Lemma~\ref{gradedbase} that $\gr \mathcal{O}_q$ is a free-graded
algebra over $\gr Z_0$. It follows from Lemma~\ref{fgvsff} that $Z_0 \subseteq \mathcal{O}_q$ is a free-filtered extension and so by Theorem~\ref{transfertheorem} we see that
$\gr Z_0 \subseteq \gr \mathcal{O}_q$ is a free-graded Frobenius extension. In particular, it is a Frobenius extension. Write $\oOq := \gr \mathcal{O}_q$ and $\oZo := \gr Z_0$.
Now it follows from Corollary~\ref{simplifydefn} that $\oZo \subseteq \oOq$ is actually a free-graded Frobenius extension with respect to any grading such that
\begin{itemize}
\item[(i)]{$\oZo$ is a graded subalgebra of $\oOq$; and}
\item[(ii)]{$\oOq$ is a free-graded module over $\oZo$.}
\end{itemize}
We consider the grading on $\oOq$ defined by setting
\begin{eqnarray*}
& & \deg(x_1) = \deg(x_3) = \deg(x_5) = \deg(x_6) = 2;\\
& & \deg(x_2) = \deg(x_4) = 1.
\end{eqnarray*}
Since $\deg(x_3) + \deg(x_4) = \deg(x_2) + \deg(x_5) = 3$ this really does define a grading on $\oOq$ and it is easy to see that $\oOq$ is a free-graded module
over $\oZo$ with basis described in Lemma~\ref{gradedbase}. Using Corollary~\ref{simplifydefn} again we see that $\oZo \subseteq \oOq$ is a free-graded
Frobenius extension with respect to this new grading. By inspection the largest degree of any basis element is $\deg(x_1^{\ell-1} x_3^{\ell-1} x_5^{\ell-1} x_6^{\ell-1}) = 8 (\ell-1)$.
There are precisely 2 basis elements of degree 1 in the basis of Lemma~\ref{gradedbase} and there are no elements of degree $8(\ell-1)-1$. This contradicts
Lemma~\ref{symmetricdegrees}.
\end{proof}

\end{document}